\documentclass[a4paper,11pt,reqno]{amsart}
\usepackage[normalem]{ulem}
\usepackage{etoolbox}
\makeatletter
\patchcmd{\@setauthors}{\MakeUppercase}{}{}{}  

\makeatother

\usepackage[text={400pt,660pt},centering]{geometry}

\usepackage{amsthm, amssymb, amsmath, amsfonts, mathrsfs}
\usepackage{mathtools}
\usepackage{scalerel} 
\usepackage[colorlinks=true, pdfstartview=FitV, linkcolor=blue, citecolor=blue, urlcolor=blue,pagebackref=false]{hyperref}

\usepackage{esint} 
\usepackage{MnSymbol} 

\usepackage{enumerate}
\usepackage{enumitem}
\setlist[enumerate,1]{label=(\alph*), font=\normalfont\bfseries}

\usepackage{microtype}

\usepackage{tikz} 
\usetikzlibrary{shapes,arrows,positioning,calc}

\tikzstyle{result} = [rectangle, 
minimum width=4cm, 
minimum height=1cm,  
text width=4cm, 
align=flush center,
font = \footnotesize, 
draw=black]

\tikzstyle{input} = [rectangle, 
minimum width=3cm, 
minimum height=1cm, 
text width=3cm, 
align=flush center,
font = \footnotesize,
draw=black]

\tikzstyle{arrow} = [thick, ->,>=stealth]

\newtheorem{proposition}{Proposition}
\newtheorem{theorem}[proposition]{Theorem}
\newtheorem{lemma}[proposition]{Lemma}
\newtheorem{corollary}[proposition]{Corollary}

\theoremstyle{remark}
\newtheorem{remark}[proposition]{Remark}

\theoremstyle{definition}

\numberwithin{equation}{section}
\numberwithin{proposition}{section}

\renewcommand{\ge}{\geqslant}
\renewcommand{\leq}{\leqslant}
\renewcommand{\geq}{\geqslant}

\newcommand{\mcl}{\mathcal}

\newcommand{\F}{\mathcal{F}}

\newcommand{\E}{\mathbb{E}}

\newcommand{\1}{\mathbf{1}}
\newcommand{\R}{\mathbb{R}}

\renewcommand{\P}{\mathbb{P}}
\newcommand{\ov}{\overline}
\renewcommand{\bar}{\overline}
\newcommand{\un}{\underline}

\renewcommand{\tilde}{\widetilde}

\newcommand{\ep}{\varepsilon}
\renewcommand{\d}{{\mathrm{d}}}

\renewcommand{\epsilon}{\varepsilon}

\newcommand{\Rd}{{\mathbb{R}^d}}
\newcommand{\n}{\mathbf{n}}

\DeclareMathOperator{\diam}{diam}

\renewcommand{\n}{\overrightarrow{\mathbf{n}}}

\numberwithin{equation}{section}

\newcommand{\Om}{\Omega}

\allowdisplaybreaks

\newcommand{\vertiii}[1]{{\left\vert\kern-0.25ex\left\vert\kern-0.25ex\left\vert #1 
	\right\vert\kern-0.25ex\right\vert\kern-0.25ex\right\vert}}

\title[]{ E\MakeLowercase{xplosion versus decay for boundary derivatives \\  of}  $p$\MakeLowercase{-harmonic functions as} $p$ \MakeLowercase{tends to} $1$: \MakeLowercase{nonlocality}} 

\author[Y\MakeLowercase{uval} P\MakeLowercase{eres and }H\MakeLowercase{an} W\MakeLowercase{ang}]{Yuval Peres and Han Wang}

\address{Y. Peres, Beijing Institute of Mathematical Sciences and Applications, Beijing, China}
\email{yperes@bimsa.cn}
\address{H. Wang, Qiuzhen College, Tsinghua University, Beijing, China}
\email{wanghan21@mails.tsinghua.edu.cn}

\begin{document}

\maketitle

\begin{abstract}
 We consider the Dirichlet problem for the $p$-Laplacian on a bounded Lipschitz domain $\Omega \subset \R^d$ with a $\{0,1\}$-valued function as the boundary condition and study the dependence of the boundary derivative on $p$ as $p\downarrow1$. We provide sufficient conditions for the derivative to explode at rate $\frac{C_\Omega}{p-1}$ and to decay at rate $\exp(-\frac{c_\Omega}{p-1})$. Surprisingly, whether explosion or decay occurs is not determined locally. We also present a critical example of a cylinder where this derivative explodes at rate $\frac{C_d}{\sqrt{p-1}}$.

\bigskip
	
\noindent \textsc{MSC 2020:} 35J92, 91A15.

\noindent \textsc{Keywords:} $p$-harmonic functions, Hopf lemma, tug-of-war with noise.
 
\end{abstract}
\section{Introduction}

 We study the boundary behavior of $p$-harmonic functions, i.e., weak  solutions to the $p$-Laplace equation 
$ \Delta_p u=0$, where $1<p<\infty$ and $\Delta_p u=\nabla\cdot(|\nabla u|^{p-2}\nabla u)$.
   Let $u$ be a non-constant $p$-harmonic function in a domain $\Omega \subset \R^d$.
   Then $u$ satisfies the Hopf boundary point lemma, which says that at every point on $\partial\Omega$ where $u$ is minimized and $\Omega$ admits an interior supporting ball, the inward normal derivative is positive if it exists  (this follows from \cite[Proposition 3.2.1]{TolksdorfHopfLemma}). Our goal is to determine the asymptotic behavior of this derivative as $p\downarrow 1$. 

Since the Dirichlet problem for the $p$-Laplacian is, in general, not solvable for $p=1$, it is natural to expect singularities as $ p \downarrow 1$.  
The following example shows two different behaviors near the boundary. Let $\gamma(p,d)=\frac{d-p}{p-1}$. Recall that the function $$u_p(x)=\frac{|x|^{-\gamma}-1}{2^{-\gamma}-1}$$is $p$-harmonic on $\{x\in\Rd,\ 1<|x|< 2\}$ and satisfies the boundary conditions $u_p(x)=0$ for $|x|=1$ and $u_p(x)=1$ for $|x|=2$. A simple calculation shows that as $p$ tends to $1$, the normal derivative of $u_p$ explodes on $\{|x|=1\}$, and tends to $0$ rapidly on $\{|x|=2\}$. See Figures \ref{fig:differentradicalsolution} and \ref{fig:radialplot}.
\begin{figure}
    \centering
    \begin{minipage}[t]{0.3\textwidth}
         \includegraphics[width=\linewidth]{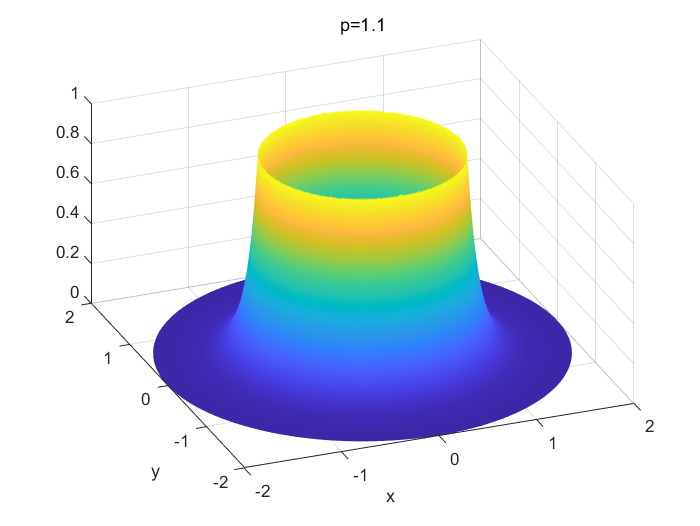}
    \end{minipage}
    \begin{minipage}[t]{0.3\textwidth}
         \includegraphics[width=\linewidth]{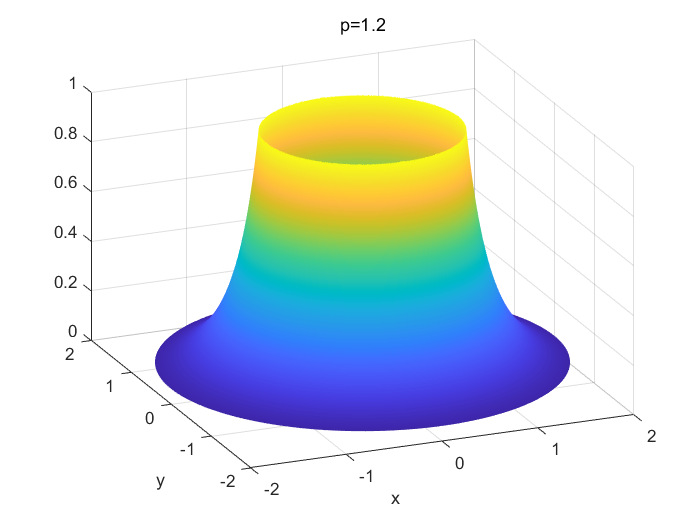}
    \end{minipage}
    \begin{minipage}[t]{0.3\textwidth}
         \includegraphics[width=\linewidth]{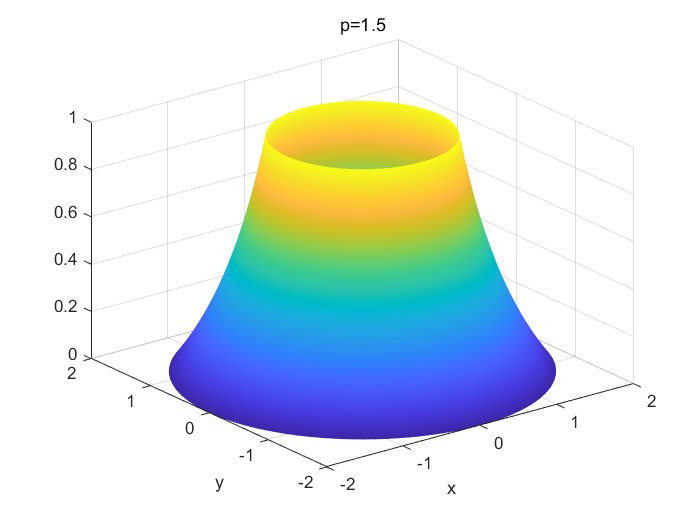}
    \end{minipage}
   
    \caption{The behavior of radial $p$-harmonic function for different $p$. As $p$ approaches $1$, the boundary derivative grows rapidly near $\{|x|=1\}$, while it approaches $0$ near $\{|x|=2\}$.}
    \label{fig:differentradicalsolution}
\end{figure}
\begin{figure}
    \centering
    \begin{minipage}[t]{0.4\textwidth}
         \includegraphics[width=\linewidth]{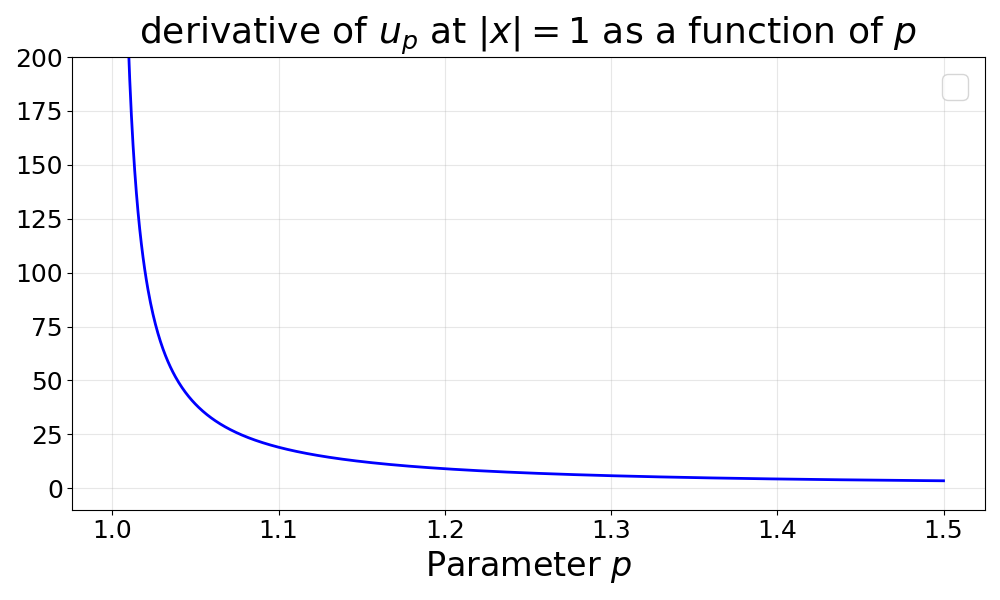}
    \end{minipage}
    \begin{minipage}[t]{0.4\textwidth}
         \includegraphics[width=\linewidth]{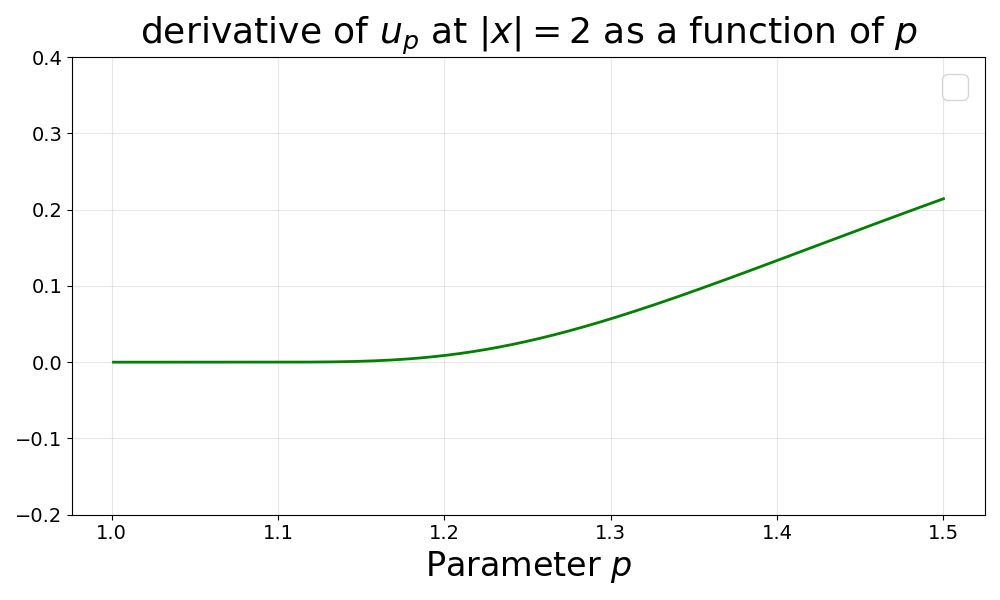}
    \end{minipage}
    \caption{The (absolute value) of the boundary derivative at the boundary as a function of $p$.}
    \label{fig:radialplot}
\end{figure}

The natural question is how to classify such behaviors. From the example, a naive guess is that the curvature of the boundary determines them. However, this is not the complete story. As we shall see later, the limiting behavior of the normal derivative is actually determined by some nonlocal conditions on the boundary values.

We focus on the special case where the boundary values form an upper semicontinuous $\{0,1\}$-valued function. In this case, the Perron solution is the canonical solution  of the Dirichlet problem; see \cite[Chapter 9]{HKMbooknonlinearpotentialtheory} for precise definitions.

Our main results are the following two characterizations of boundary derivative asymptotics. The definitions of the notation are in Section \ref{sec:proof}.

\begin{theorem}\label{thm.explosion}
    Let $\Omega\subset\Rd$ be a bounded Lipschitz domain and assume $\partial \Omega$ is $C^2$ near $x_0\in\partial\Omega$. Let $F$ be an indicator function of a closed subset of $\partial \Omega$. We assume $F(x_0)=0$. For $p\in(1,2)$, let $u$ be the $p$-harmonic function with Dirichlet boundary condition $F$.
    \begin{enumerate}
        \item 
     \begin{equation}
         \exists  C=C(\Omega,F)>0 \quad 
          \text{such that} \quad \ov{\frac{\partial u}{\partial \n}}(x_0)\leq \frac{C}{p-1}.
     \end{equation}
     \\\item 
     If there exists $R>0$ such that $\{F=0\}\setminus \{x_0\}\subset B(x_0-R\n(x_0),R)$,
     then 
     \begin{equation}
         \exists  c=c(\Omega,F)>0 \quad 
          \text{such that} \quad \frac{c}{p-1}\leq\un{\frac{\partial u}{\partial \n}}(x_0).
     \end{equation}
    \end{enumerate}
     \end{theorem}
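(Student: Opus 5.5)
Both parts come from comparison with explicit radial $p$-harmonic functions, $v(x)=a\,|x-z|^{-\gamma}+b$ with $\gamma=\frac{d-p}{p-1}$, each placed so that it touches $\partial\Omega$ only at $x_0$. The comparison principle for Perron solutions then gives one-sided bounds on $u$ near $x_0$, which pass to the upper and lower normal derivatives. The basic computation is that on a sphere of radius $r$ about the centre, the radial derivative of such a profile has size $\gamma/r$ times the amplitude, and $\gamma\sim \frac{d-1}{p-1}$ as $p\downarrow 1$.

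\textbf{Upper bound (a).} I would avoid ball barriers and use a flat one. Since $\partial\Omega$ is $C^2$ near $x_0$ and $F=1_K$ with $K$ closed and $x_0\notin K$, there is $\rho>0$ with $F=0$ on $\partial\Omega\cap B(x_0,\rho)$. The function
\[
w(x)=\min\Bigl\{1,\ \tfrac{C}{p-1}\,\langle x-x_0,\n(x_0)\rangle+A|x-x_0|^2\Bigr\}
\]
is not exactly $p$-superharmonic, so I would instead use the exterior-ball version. Pick an exterior tangent ball $B(x_0+r\n',r)$, where $\n'$ is the outward normal direction and $r<\rho/4$, and set
\[
v(x)=\frac{r^{-\gamma}-|x-z|^{-\gamma}}{r^{-\gamma}-(2r)^{-\gamma}},\qquad z=x_0+r\n' .
\]
This $v$ is $p$-harmonic off $z$. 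It vanishes on the exterior sphere, is nonnegative on $\overline\Omega$, and equals $1$ on $|x-z|=2r$. Comparing $u$ with $\min(v,1)$ on $\Omega\cap B(z,2r)$ works because $u\le1$ everywhere and $u=0\le v$ on the part of $\partial\Omega$ inside that ball, since $2r+r<\rho$. This gives $u\le v$ there. Then
\[
\partial_{\n}v(x_0)=\frac{\gamma r^{-\gamma-1}}{r^{-\gamma}-(2r)^{-\gamma}}\le \frac{2\gamma}{r}\le \frac{C}{p-1},
\]
because $(2r)^{-\gamma}\le \tfrac12 r^{-\gamma}$ once $\gamma\ge1$ (true for $p<2$ and $d\ge2$). $C$ depends only on $r,\rho,d$. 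This part is routine.

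\textbf{Lower bound (b).} Let $B=B(z,R)$ with $z=x_0-R\n(x_0)$. The hypothesis says $\{F=0\}\subset B\cup\{x_0\}$, so $F=1$ on $\partial\Omega\setminus\overline B$ and also at all points of $\partial\Omega\cap\partial B$ other than $x_0$. The natural barrier is the decreasing radial function centred at $z$, namely $v(x)=\frac{R^{-\gamma}-|x-z|^{-\gamma}}{R^{-\gamma}-s^{-\gamma}}$, or an increasing profile in $|x-z|$. I want a function that is $\le u$, vanishes at $x_0$ and has large slope there.

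One option is to take $\tilde v=1-\frac{|x-z|^{-\gamma}-R_2^{-\gamma}}{R_1^{-\gamma}-R_2^{-\gamma}}$, which is $p$-harmonic, equals $1$ on the small sphere $|x-z|=R_1$ and... but $x_0$ lies on the outer sphere, where the increasing function vanishes. The right choice is therefore the interior annulus: $v=0$ on $|x-z|=R$ and $v=1$ on $|x-z|=R_1<R$. This has $\partial_{\n}v(x_0)\approx\gamma/R\sim\frac{c}{p-1}$, but it points the wrong way: $v$ grows into the inside of $B$, whereas we need $u\ge v$ near $x_0$ inside $\Omega$. I would instead reflect: work on $\Omega\setminus\overline{B(z,R')}$ with $R'<R$ slightly smaller, so that $x_0$ is at distance $R-R'$ from it. Take $v$ to be $0$ on the outer sphere $|x-z|=R''$, where $R''$ is just larger than $R$, and let $v$ increase inward. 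Then $v=0$ at $x_0$ is lost, so I would adjust to an annulus tangent at $x_0$ from inside $\Omega$. The key point is then to use $u\ge0$ together with the fact that $u$ is close to $1$ on the region outside $B$ near $\partial\Omega$. Since $F=1$ there, and using Lipschitz regularity plus a quantitative boundary Hölder estimate uniform in $p$, we get $u\ge1-\epsilon$ on a set separating $x_0$ from the rest.

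\textbf{Main obstacle.} The main obstacle is this uniform-in-$p$ lower bound on $u$ away from $B$, especially near the tangency circle $\partial\Omega\cap\partial B$ close to $x_0$. I would handle it with a tug-of-war-with-noise or Wiener-type capacity estimate, giving $u\ge 1-e^{-c/(p-1)}$ outside $B$. Combined with a radial $p$-subsolution centred at $z$ on $\Omega\setminus B(z,R)$, growing like $\gamma\,\mathrm{dist}/R$, this yields $\underline{\partial_{\n}u}(x_0)\ge c/(p-1)$.
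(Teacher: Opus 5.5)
Part (a) is correct and is the paper's argument: an exterior tangent ball at $x_0$ and the radial $p$-harmonic function on the annulus around it, compared with $u$ on $\Omega\cap B(z,2r)$. There is one slip. $\gamma=\frac{d-p}{p-1}\ge 1$ holds only when $p\le\frac{d+1}{2}$, so it fails for $d=2$, $p>\frac32$. The bound $\partial_{\n}v(x_0)=\frac{\gamma}{r(1-2^{-\gamma})}\le\frac{2\max(1,\gamma)}{r}\le\frac{2(d-1)}{r(p-1)}$ covers all $p\in(1,2)$.

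Part (b) has a genuine gap. Your final plan rests on a uniform-in-$p$ bound $u\ge 1-e^{-c/(p-1)}$ ``outside $B$''. You attribute it to tug-of-war or a Wiener-type estimate but do not prove it. As stated it is false: $|x_0+h\n-z|=R+h$, so $x_0+h\n\notin B$, while $u(x_0+h\n)\to u(x_0)=0$.

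The detour comes from an orientation error. You wrote down the right barrier, $v(x)=\frac{R^{-\gamma}-|x-z|^{-\gamma}}{R^{-\gamma}-s^{-\gamma}}$ with $s>R$, and discarded it as pointing ``the wrong way''. But $\n$ is the inward normal, so $z=x_0-R\n$ sits on the exterior side. Hence $v$ vanishes on $\partial B(z,R)$ and increases along $x_0+h\n$ into $\Omega$, at rate $\partial_{\n}v(x_0)=\frac{\gamma}{R(1-(R/s)^{\gamma})}\ge\frac{\gamma}{R}$, which is of order $\frac{d-1}{R(p-1)}$ as $p\downarrow 1$. The ``tangency circle'' is not an issue either: near $x_0$ the hypothesis forces $\partial\Omega\cap\partial B(z,R)=\{x_0\}$.

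The missing observation is that this $v$ can be compared with $u$ using the boundary data alone, with no interior information on $u$. Choose $s$ so large that $\overline\Omega\subset B(z,s)$, so $v\le 1$ on $\overline\Omega$. Then check the boundary of $\Omega\cap\bigl(B(z,s)\setminus\overline B(z,R)\bigr)$:
\begin{itemize}
\item on $\partial B(z,R)\cap\Omega$, $v=0\le u$;
\item on the relatively open set $\partial\Omega\setminus\overline B(z,R)$, the hypothesis gives $F\equiv 1$, so $u$ is continuous there with value $1\ge v$;
\item at points of $\partial\Omega\cap\partial B(z,R)$, $v=0\le u$.
\end{itemize}
The comparison principle on that region therefore gives $v\le u$. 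Equivalently, $\max(v,0)$, set to $0$ on $\Omega\cap B(z,R)$, belongs to $\mathcal{L}_F$, so $\max(v,0)\le\underline H_F=u$. Since $u(x_0)=v(x_0)=0$, this yields $\underline{\frac{\partial u}{\partial \n}}(x_0)\ge\partial_{\n}v(x_0)\ge\frac{c}{p-1}$. This is exactly the paper's one-line proof of (b); probabilistic machinery is only needed for the exponential lower bound in part (b) of Theorem~\ref{thm.exponentialdecay}.
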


     Here, the condition of order $\frac{1}{p-1}$ explosion in (b) means that the region $\{F=0\}$ is dominated by a large ball centered outside $\Omega$; see figure \ref{fig.explosion}.
\begin{figure}[!h]
    \centering
        \includegraphics[width=\linewidth,trim=450 1250 450 0,clip]{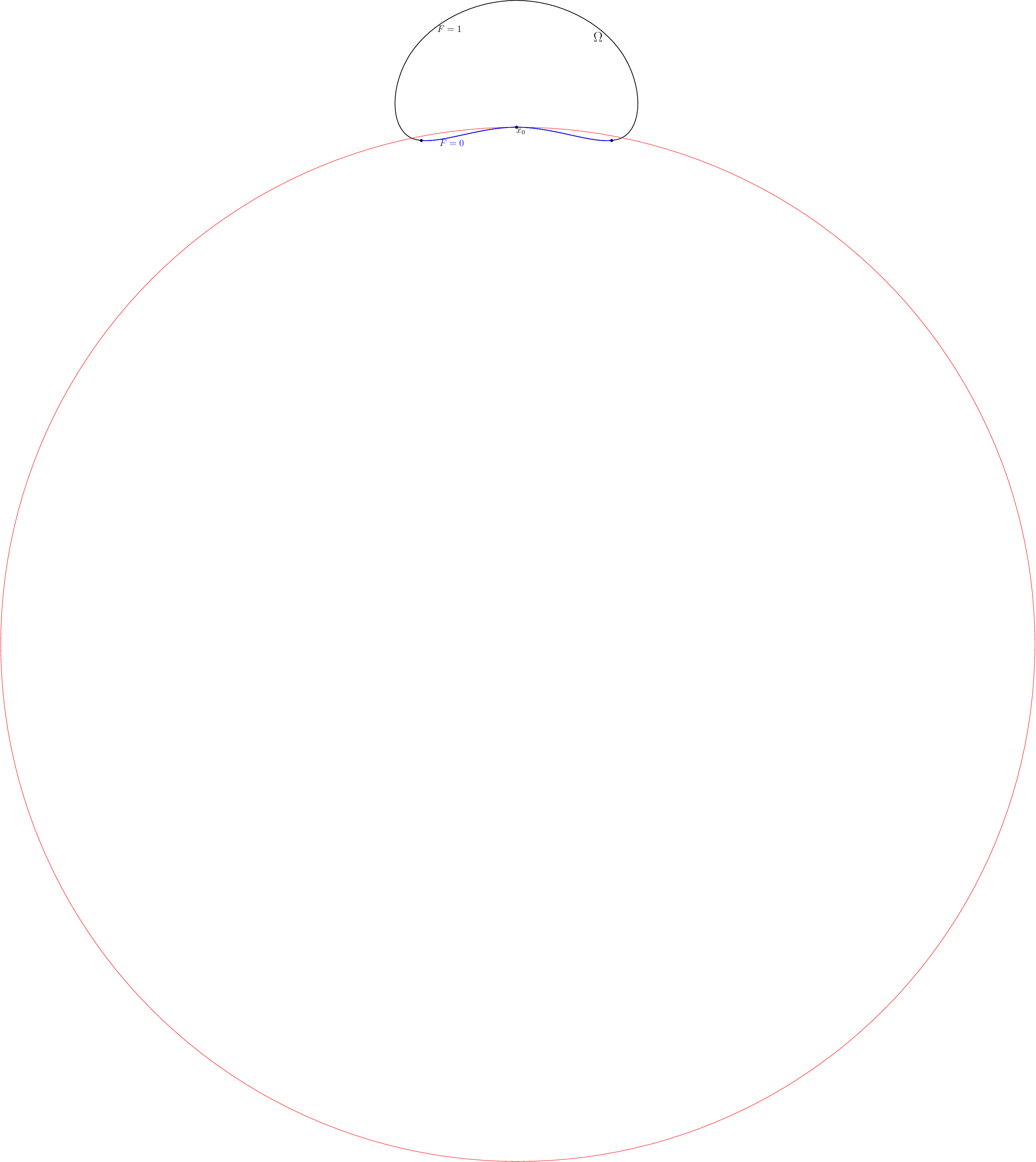}
        \caption{If $\{F=0\}\setminus \{x_0\}$ is contained in a ball (with red boundary) tangent at $x_0$, then a lower bound of order $\frac{1}{p-1}$ on the boundary derivative holds as $p \downarrow 1$; see Theorem \ref{thm.explosion}.}
        \label{fig.explosion}
\end{figure}
     \begin{theorem}\label{thm.exponentialdecay}
     Let $\Omega\subset\Rd$ be a bounded Lipschitz domain and assume $\partial \Omega$ is $C^2$ near $x_0\in\partial\Omega$. Let $F$ be an indicator function of a closed subset of $\partial \Omega$. We assume $F(x_0)=0$.   For $p\in(1,2)$, let $u$ be the $p$-harmonic function with Dirichlet boundary condition $F$.
     \begin{enumerate}
     \item   
     (hyperplane separation) If there exist $\xi\in\Rd$ and $\beta\in\R$ such that $\xi \cdot x_0>\beta$ and
             $\{x\in\partial\Omega:\ F(x)=1\}\subset \{\xi \cdot x<\beta\}$, then 
     \begin{equation}
          \exists  C=C(\Omega,F)>0 \quad 
          \text{such that} \quad \ov{\frac{\partial u}{\partial \n}}(x_0)\leq \exp\left(-\frac{C}{p-1}\right).
     \end{equation}
     \\ \item If $\{F=1\}$ has a nonempty relative interior in $\partial\Omega$, then
     \begin{equation}
         \exists c=c(\Omega,F)>0\quad \text{such that}\quad\underline{\frac{\partial u}{\partial \n}}(x_0)\geq \exp\left(-\frac{c}{p-1}\right).
     \end{equation}
     \end{enumerate}
\end{theorem}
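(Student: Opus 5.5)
Both parts rest on comparison with explicit radial $p$-harmonic functions, combined with the maximum principle. We write $\gamma=\frac{d-p}{p-1}$, so that $\gamma\to\infty$ as $p\downarrow1$; the radial profiles $|x-z|^{-\gamma}$ and their affine modifications are $p$-harmonic away from $z$. Since $u$ is the Perron solution with $0\le u\le1$, any $p$-superharmonic function that dominates $F$ on $\partial\Omega$ dominates $u$. Likewise, any $p$-subharmonic function that lies below $F$ on $\partial\Omega$ lies below $u$. Because $u(x_0)=0$, bounds on $u(x_0+t\n)$ for small $t$ give bounds on the normal difference quotients at $x_0$.

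\textbf{Part (a): upper bound.} Separate $\{F=1\}$ from $x_0$ by the hyperplane $H=\{\xi\cdot x=\beta\}$. Take a ball $B(z,\rho)$ with $z$ far away on the side of $H$ opposite to $x_0$, with $\rho$ huge, so that $\partial B(z,\rho)$ approximates $H$. Choose it so that $\{F=1\}\subset B(z,\rho)$ and $x_0$ lies at distance $\ge\delta>0$ outside it, where $\delta$ depends only on the separation gap. Consider the comparison function
\[
v(x)=\frac{|x-z|^{-\gamma}-R_1^{-\gamma}}{\rho^{-\gamma}-R_1^{-\gamma}},\qquad R_1=\rho+\delta ,
\]
in the annulus $\rho<|x-z|<R_1+\operatorname{diam}\Omega$, truncated appropriately. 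Since $p<d$ would be needed for the sign of $\gamma$, we restrict to $p$ close to $1$, which is all that matters. The function $v$ equals $1$ on $\partial B(z,\rho)$ and is $p$-harmonic outside. Hence $\min(v,1)$, or $\max(v,0)$, is $p$-superharmonic on $\Omega\setminus B(z,\rho)$ and dominates $F$, so $u\le \max(v,0)$ near $x_0$. To get a bound that actually vanishes at $x_0$, we instead center the annulus so that its outer sphere passes through $x_0$. Concretely, we use a ball $B(z,\rho)\supset\{F=1\}$ and the outer sphere $|x-z|=|x_0-z|=:R_1$, with $R_1-\rho\ge\delta$. The problem is that $\Omega$ may stick out of $B(z,R_1)$. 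To handle this, we choose $z$ along the line through $x_0$ in the inward normal direction, pushed far away, so that the exterior sphere at $x_0$ contains $\Omega$ locally. Near $x_0$ the boundary is $C^2$, so a large ball of radius $R_1$ tangent at $x_0$ from outside, i.e. containing $\Omega$ near $x_0$, exists only if curvature permits. Otherwise we use a fixed exterior-type barrier. The first thing I would try is simply bounding $u\le v$ on $\Omega\cap B(x_0,r)$, using $u\le1$ on $\partial B(x_0,r)$ together with the exponential smallness of $v$ there. Differentiating $v$ at $x_0$ gives
\[
\gamma R_1^{-\gamma-1}\big/\big(\rho^{-\gamma}-R_1^{-\gamma}\big)\approx \gamma\rho^{-1}\Big(\frac{\rho}{\rho+\delta}\Big)^{\gamma}\le e^{-C/(p-1)} .
\]

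\textbf{Part (b): lower bound.} Pick a small ball $B(y,r)$ inside $\Omega$ whose boundary touches $\partial\Omega$ within the relative interior of $\{F=1\}$. By boundary regularity, $u\ge1/2$ on a smaller ball $B(y,r/2)$ uniformly in $p$. Here a Harnack chain or a barrier $1-$(radial function) near a flat piece gives uniformity, and this is where the nonempty interior is used. Connect $B(y,r/2)$ to the interior ball $B(x_0-s\n,s)$ at $x_0$ by a chain of $N=N(\Omega)$ balls. Along each link, compare $u$ with the subharmonic radial function $w(x)=\frac{|x-c|^{-\gamma}-\rho_2^{-\gamma}}{\rho_1^{-\gamma}-\rho_2^{-\gamma}}$ on an annulus. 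Each step loses a factor of at most $(\rho_1/\rho_2)^{\gamma}=e^{-O(1/(p-1))}$. After $N$ steps, $u\ge e^{-c'/(p-1)}$ on a sphere $|x-c|=s/2$ with $c=x_0-s\n$. A final annulus comparison on $s/2<|x-c|<s$ yields the normal derivative bound
\[
\frac{\partial u}{\partial\n}(x_0)\ge e^{-c'/(p-1)}\cdot\gamma s^{-1}2^{-\gamma}\ge e^{-c/(p-1)} .
\]

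The main obstacle is part (a). There one must build a barrier that vanishes at $x_0$, is $\ge1$ on $\{F=1\}$, and is $\ge0$ on the rest of $\partial\Omega$, all with exponentially small slope. Gluing the radial barrier (which is small but positive near $x_0$) with a local exterior-ball barrier at $x_0$ should work, provided the gluing constant $\max_{\partial B(x_0,r)}v$ is itself $e^{-C/(p-1)}$. It is, because the entire neighborhood $B(x_0,r)$ is at distance $\ge\delta/2$ outside $B(z,\rho)$. The local barrier contributes a derivative factor that is at most polynomial in $\gamma$, which is absorbed by slightly reducing $C$.
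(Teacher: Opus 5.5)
\textbf{Gap in part (b), first step.} You claim $u\ge 1/2$ on $B(y,r/2)$ uniformly in $p$. This is false: neither boundary regularity nor the Harnack constants for $p$-harmonic functions are uniform as $p\downarrow 1$.

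Here is a counterexample. Suppose $\partial\Omega$ is flat near $q$ with inward unit normal $\nu$, $\{F=1\}$ is a closed disc $D$ of radius $a$ centred at $q$ in that flat piece, and $F=0$ elsewhere. Put $c=q-s\nu$ and $R=\sqrt{a^2+s^2}$, so that $\overline B(c,R)$ meets the plane exactly in $D$. Then $(R/|z-c|)^{\gamma}$ is $p$-harmonic on $\Omega\setminus\overline B(c,R)$, equals $1$ on $\partial B(c,R)$, and is $\ge F=0$ on $\partial\Omega\setminus\overline B(c,R)$, so it dominates $u$ there. At $y=q+r\nu$ this gives
\[
u(y)\le\bigl(\sqrt{a^2+s^2}/(r+s)\bigr)^{\gamma}\le e^{-c'/(p-1)}
\]
for $p$ near $1$, once $s>a^2/(2r)$. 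So at any fixed distance from a flat piece of $\{F=1\}$, $u$ may be exponentially small. A ``$1-$radial'' barrier at an exterior tangent ball only yields $u\ge 1/2$ in a layer of width $O(p-1)$.

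This starting estimate is exactly what the paper's Lemma~\ref{lem.cylinderlowerbound} supplies, via tug-of-war, for a cylinder whose top lies in $\Omega^c$. Even there it gives only the non-uniform bound $e^{-Cd/(p-1)}$.

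\textbf{How to repair it.} That weaker bound is all your argument needs, and you can get it without games. Take $q$ in the relative interior of $\{F=1\}$, and a ball $\overline B(c,\rho_1)\subset\R^d\setminus\overline\Omega$ with $|c-q|<\rho_2/4$; such a ball exists by the exterior cone condition. Choose $\rho_2$ so that $\overline B(c,\rho_2)\cap\partial\Omega\subset\{F=1\}$. Then $\max\{\phi,0\}$ with $\phi(z)=\frac{|z-c|^{-\gamma}-\rho_2^{-\gamma}}{\rho_1^{-\gamma}-\rho_2^{-\gamma}}$ lies in $\mathcal L_F$. This gives $u\ge e^{-C/(p-1)}$ on a fixed ball inside $\Omega\cap B(c,\rho_2/2)$. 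From there, your radial ball chain and your final annulus comparison at $x_0$ are correct. The repaired proof is then a purely barrier-based alternative to the paper's cylinder lemma and chain of cylinders. The paper's route additionally produces a quantitative tug-of-war hitting estimate of independent interest.

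\textbf{Part (a).} Your final gluing plan is the paper's argument. A radial barrier makes $u$ exponentially small on a separating sphere, and then the local bound of Theorem~\ref{thm.explosion}(a) is scaled by that constant. However, the barrier you actually wrote is inadmissible. With its zero level at $R_1=\rho+\delta\le|x_0-z|$, it is negative wherever $|x-z|>R_1$, which in general includes part of $\Omega$. Also, $\max(v,0)$ is $p$-subharmonic, not $p$-superharmonic, so it cannot serve as an upper barrier. Put the zero level on a sphere $\partial B(z,R')$ with $\overline\Omega\subset B(z,R')$, as the paper does.
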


 On the other hand, the condition of exponential decay in Theorem \ref{thm.exponentialdecay} is about $\{F=1\}$ being separated by a hyperplane from $x_0$. See figure \ref{fig.nonexplosion}. The exact definitions and proof of the results are in Section \ref{sec:proof} and \ref{sec:proof2}.

\begin{figure}[!h]
        \includegraphics[width=\linewidth]{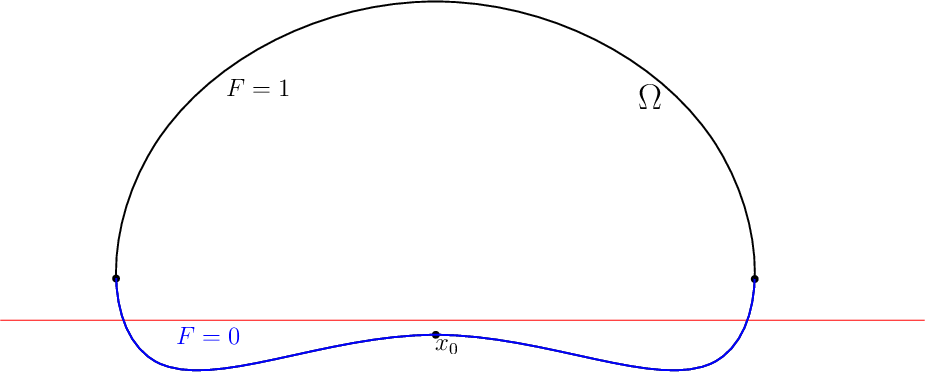}
        \caption{If $\{F=1\}$ is separated by a hyperplane from $x_0$, then the exponential decay of boundary derivative holds in Theorem \ref{thm.exponentialdecay}.  The curvature at $x_0$ is irrelevant.} 
        \label{fig.nonexplosion}
\end{figure}
\smallskip

A special method using measure transformations also appears in the probabilistic approach. The intermediate result in this approach, Lemma \ref{lem.cylinderlowerbound}, is also of its own interest, as it gives a lower bound (with explicit dependence on $p$ and $d$) for the hitting probability of the tug-of-war process. See \cite{HarnackPW} for an application of this lemma.

In Section \ref{sec:critical}, we analyze a critical example  that falls between the cases  described in Theorems \ref{thm.explosion} and  \ref{thm.exponentialdecay}. In this example, the transition between the boundary values occurs on the tangent plane at $x_0$, and we show that the  boundary derivative is of order $(p-1)^{-1/2}$ as $p \downarrow 1$.  
\begin{figure}[h!]%
	\centering
	\includegraphics[scale=0.5]{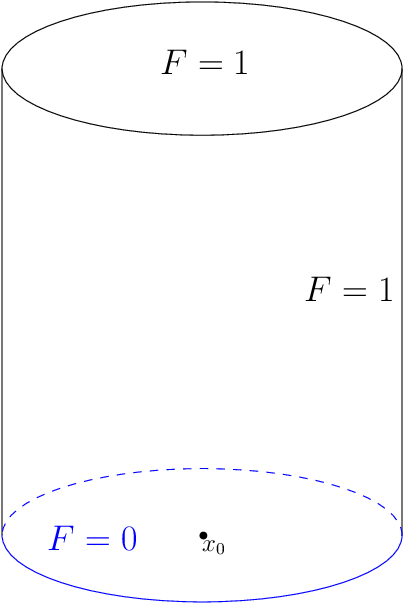}
	\caption{The cylinder and the boundary condition in Theorem \ref{thm.cylinder}. $F=1$ on the top and sides of the cylinder.}
    \label{fig:introcylinder}
\end{figure}
\begin{theorem}\label{thm.cylinder}
Consider the  cylinder $Q=\{(x,y):x\in B^d(0,1),\ y\in (0,1)\} \subset\R^{d+1}$ depicted in Figure \ref{fig:introcylinder}, where $d \ge 1$.
    Let $F:\partial Q\rightarrow\{0,1\}$ be the indicator function:
    \begin{align*}
        F(x,y)=\begin{cases}
            1, &|x|=1 \text{ or } y=1
            \\0, &\text{ otherwise}
        \end{cases}
    \end{align*}
     and let $u$ be the $p$-harmonic function on $Q$ with boundary condition $F$. Then,
    \begin{equation}
        c \, \sqrt{\frac{d}{p-1}}\leq \frac{\partial u}{\partial y}(0,0)\leq \sqrt{\frac{d}{p-1}},
    \end{equation}
    where $c=\frac{1}{24}$.
\end{theorem}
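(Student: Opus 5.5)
Both bounds will come from the comparison principle for $p$-harmonic functions. We compare $u$ on $Q$, or on a subdomain, with explicit barriers that depend only on $y$ and $r=|x|$. The relevant scale is $\sqrt{(p-1)/d}$. The normalized $p$-Laplacian of a function $v(r,y)$ is
\[
\Delta_p^N v=\Delta v+(p-2)\,\frac{\langle D^2v\,\nabla v,\nabla v\rangle}{|\nabla v|^2},
\]
and for $p$ near $1$ it is nearly the $1$-Laplacian plus $(p-1)$ times the second derivative in the gradient direction.

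\textbf{Upper bound.} We build a supersolution $w$ on $Q$ with $w\ge F$ on $\partial Q$ and $w(0,0)=0$, and we make $\partial_y w(0,0)=\sqrt{d/(p-1)}$. A natural candidate is
\[
w(x,y)=\min\Bigl(1,\ \lambda y+\mu |x|^2\Bigr),
\]
or one of its level-set variants. Its level sets are paraboloids $y=(s-\mu r^2)/\lambda$ opening downward; away from the axis they point toward the sides, where $F=1$. In the gradient direction $\nabla w=(2\mu x,\lambda)$ we compute:
\begin{itemize}
\item the second derivative along $\nabla w$ is $2\mu|x|^2\cdot 4\mu^2/|\nabla w|^2$;
\item the trace part is $2\mu d$.
\end{itemize}
So we need, roughly, $-\Delta_p^N w\ge 0$, that is,
\[
\text{(mean curvature term)}\cdot(\text{sign})+(p-1)\,\partial_{\nu\nu}w\le 0 .
\]
Since the level sets are concave in the right direction, we take $\mu=-\cdots$. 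This suggests replacing the quadratic profile by the function
\[
w=\lambda y-\tfrac{\lambda^2}{2}\,\frac{p-1}{d}\,\cdots,
\]
which is essentially a one-dimensional profile in $y$ perturbed by $|x|^2$. The cleaner choice is
\[
w=1-(1-\lambda y)_+\cdot\phi(|x|),
\]
or, more simply, we glue a one-dimensional exponential-type profile with a curvature correction. We tune the constants so that the curvature term from $|x|^2$, which is of size $d\mu/\lambda$, balances $(p-1)\lambda$. That gives $\mu\sim\lambda^2(p-1)/d$. The boundary condition $w\ge1$ on $|x|=1$ then forces $\mu\ge1$, hence $\lambda\le\sqrt{d/(p-1)}$. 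This is exactly the claimed constant. Checking the exact inequality, including the sign of the $(p-2)$ term in the region $\lambda y+\mu|x|^2<1$, is a routine computation. We also need $w\ge 0=F$ on the bottom, and $w\ge 1$ on the top when $\lambda\ge1$.

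\textbf{Lower bound.} We need a subsolution $v\le F$ on $\partial Q$ with $v(0,0)=0$ and $\partial_y v(0,0)\ge c\sqrt{d/(p-1)}$. We will not work on all of $Q$. Instead we use a thin cylinder $Q_h=\{|x|<1,\ 0<y<h\}$ with $h\approx\sqrt{(p-1)/d}$ and look for $v\le0$ on its bottom and sides. Since $u\ge0$ everywhere and $u\ge$ (something) on $y=h$, this reduces to a lower bound on $u$ along the slice $y=h$ near the axis.

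An alternative is a subsolution of the form $v=\lambda y-\mu|x|^2-Ky^2$ on $Q_h$, whose level sets curve the other way:
\begin{itemize}
\item the curvature term is now favourable, and $-Ky^2$ absorbs the $(p-1)\partial_{\nu\nu}$ term;
\item $v\le 0$ on $|x|=1$ requires $\lambda h\le\mu$, and $v\le u$ on top requires a lower bound on $u$ there.
\end{itemize}
To get that lower bound on $u$ at height $h$, we compare from below with a boundary-independent barrier. One option is the radial function $u_p$ from the introduction, for an annulus whose outer sphere is tangent to the top face. This shows that $u$ is bounded below by a constant on most of the upper half of $Q$ uniformly in $p$, because for $p$ near $1$ that barrier is close to $1$ away from the inner sphere. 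Then a second comparison in the slab $0<y<1/2$ with $v=\lambda y-\mu |x|^2$ and the balance $\mu\sim\lambda^2(p-1)/d$ yields $\lambda\gtrsim\sqrt{d/(p-1)}$ times an absolute constant. Tracking constants should give $c=1/24$.

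\textbf{Main obstacle.} The lower bound is the hard part. We must obtain the uniform-in-$p$ lower bound on $u$ in the interior and keep the $(p-2)$ term signed correctly where $\nabla v$ is nearly horizontal. If the pure PDE barrier fails there, we would try the tug-of-war with noise representation and Lemma \ref{lem.cylinderlowerbound}, which bounds hitting probabilities of the top face from below.
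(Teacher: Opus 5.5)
\textbf{Upper bound.} You never exhibit a supersolution, and the one you write down has the wrong sign. Take $w=\lambda y+\mu|x|^2$ (you need $\mu\ge1$ for $w\ge1$ on $\{|x|=1\}$). Then $\Delta w=2\mu d$ and $\langle D^2w\nabla w,\nabla w\rangle/|\nabla w|^2=8\mu^3|x|^2/(4\mu^2|x|^2+\lambda^2)\le 2\mu$. So in your normalization $\Delta_p^N w\ge 2\mu(d+p-2)>0$: $w$, and hence $\min(1,w)$ near the origin, is strictly $p$-subharmonic. Tuning $\lambda,\mu$ cannot fix this, because on the axis the $(p-2)$-term vanishes. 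The $+2\mu d$ coming from $|x|^2$ must instead be cancelled by a term concave in $y$, whose second derivative along the vertical gradient, weighted by $p-1$, is at least $2\mu d$. This is the paper's barrier $w=|x|^2+2ay-by^2$, for which $|\nabla w|^2\Delta w+(p-2)\langle D^2w\nabla w,\nabla w\rangle=8[|x|^2(d-b+p-2)+(a-by)^2(d-b(p-1))]\le0$ when $b=d/(p-1)$. Such a $w$ never exceeds $a^2/b$ on the axis, so the comparison must be made in the thin slab $\{y\le y_1\}$ with $y_1=\sqrt{(p-1)/d}$ and $a=\sqrt b$. There $w\ge1$ on the top and $w\ge w(x,0)=1$ on the sides. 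Your constant bookkeeping is also reversed. For $w=\mu|x|^2+2ay-by^2$ the constraints are $\mu\ge1$ (sides), $b(p-1)\ge\mu d$ (axis) and $a^2\ge b$ (top). These give a \emph{lower} bound on the barrier's slope, $\partial_y w(0,0)=2a\ge2\sqrt{d/(p-1)}$, not ``$\lambda\le\sqrt{d/(p-1)}$''. Indeed this barrier certifies $u(0,y)\le 2ay-by^2$, i.e.\ slope $2\sqrt{d/(p-1)}$ at the origin, so your heuristic does not land ``exactly'' on $\sqrt{d/(p-1)}$.

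\textbf{Lower bound.} Your thin-slab reduction correctly isolates the crux: a $p$-uniform lower bound on $m(t)=\min_{|x|\le1}u(x,t)$ at a height $t$ of order $\sqrt{(p-1)/d}$. If you keep the lateral datum $u=1$ instead of demanding $v\le0$ on the sides, the linear function $m(t)y/t$ (zero on the bottom, $\le1=u$ on the sides) then gives slope $m(t)/t$, with no curvature needed. The gap is in how you obtain $m(t)$. A bound on the upper half of $Q$, followed by a comparison in $\{0<y<1/2\}$ with data $\le0$ on the bottom and sides, cannot work. The top of that slab is separated from the origin by the hyperplane $\{y=1/4\}$. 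So by Theorem~\ref{thm.exponentialdecay}(a), the $p$-harmonic function there equal to $1$ on the top and $0$ elsewhere has normal derivative at most $e^{-C/(p-1)}$ at $(0,0)$, and so does every subsolution below it. The $\sqrt{d/(p-1)}$ growth comes from $u=1$ on the sides, which this step discards. Separately, $\lambda y-\mu|x|^2-Ky^2$ is not a subsolution, since on the axis $\Delta_p^N=-2\mu d-2K(p-1)<0$. The tug-of-war fallback does not help either: Lemma~\ref{lem.cylinderlowerbound} only gives bounds of size $\exp(-Cd/(p-1))$.

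The paper gets $m(t)$ by a downward iteration on $Q_k=\{y\le y_k\}$, $y_k=k\sqrt{(p-1)/d}$. It compares $u$ with $m_k(1-f)$, where $1-f$ is a positive multiple of $|x|^2+2a_ky-by^2-1$ with $b=\frac{d}{2(p-1)}$ and $a_k=\frac{(k+2)\sqrt d}{2\sqrt{p-1}}$. This function is $p$-subharmonic because $(a_k-by)^2\ge\frac1{p-1}$ on $Q_k$. It is $\le0$ on the bottom, and positive but $\le1$ on the sides, again using $u=1$ there. The result is $m_{k-1}\ge m_k\bigl(1-\frac{11}{(k+2)^2}\bigr)$, hence $m_{20}\ge\frac12$ and the constant $\frac1{40}$ (or $m_6\ge\frac14$ and $\frac1{24}$).

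Your radial-barrier idea can also deliver $m(t)$ directly, if the inner ball is taken at a $p$-dependent scale instead of placing the outer sphere at the top. Let $c=(0,-1/t)$ and $R=\sqrt{1+t^{-2}}$; then $\bar B(c,R)$ contains the bottom face. Hence $V=1-(|z-c|/R)^{-\gamma}$, with $\gamma=\frac{d+1-p}{p-1}$, satisfies $V\le u$ in $Q\setminus\bar B(c,R)$. On $\{y=t\}$ one has $|z-c|^2\ge(1+t^2)R^2$, so $m(t)\ge1-(1+t^2)^{-\gamma/2}$. For $t=\sqrt{2/\gamma}$ (when $\gamma>2$) this is at least $\frac12$, giving slope at least $\sqrt{\gamma/8}$, which is of order $\sqrt{d/(p-1)}$.
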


This example suggests that the boundary derivative can exhibit a range of behaviors between the explosion and exponential decay regimes. 

\subsection{Related results}

The Hopf boundary point lemma was first proved by Hopf in \cite{Hopfpaper} for classical harmonic functions. Tolksdorf in \cite[Proposition 3.2.1]{TolksdorfHopfLemma} proved a  generalization for $p$-harmonic functions with $1<p<\infty$ and the result is now known for a wide class of degenerate nonlinear equations; see, for example, \cite[Theorem 5.1.1]{bookMaximumprinciple}. There is also recent progress that generalizes this to the fractional $p$-Laplacian for $p\geq 2$ in \cite{fracplaplacian2017,fracplaplacian2023}.

In general, the boundary derivative of a $p$-harmonic function may not be well-defined. It was shown in a series works of DiBenedetto, Evans, Lewis, Tolksdorf, Uhlenbeck and Ural'seva that (weak) $p$-harmonic functions have a H\"older continuous derivative in the interior; see \cite{DibenedettointeriorC1alpha,EvansinteriorC1alpha,LewisinteriorC1alpha,TolksdorfinteriorC1alpha,UhlenbeckinteriorC1alpha,Ural'cevainteriorC1alpha}. To ensure the existence of the boundary derivative, further assumptions on the domain and the boundary conditions are needed. For example, when the boundary is $C^{1,\alpha}$ and the boundary values are of class $C^{1,\alpha}$, the existence was shown by Lieberman in \cite{Liebermanboundaryestimate}.



%
%
%
%
%
%
\section{Nonlocal behavior via comparison}\label{sec:proof}
In this part, we derive some nonlocal behavior of the boundary derivative of $p$-harmonic functions as $p$ approaches 1. We will give the proof of Theorem \ref{thm.explosion} and part (a) of Theorem \ref{thm.exponentialdecay}. 

\subsection{Notation} Throughout the section, we assume that $\Om\subset\Rd$ is a bounded Lipschitz domain and $\partial\Omega$ is $C^2$ near $x_0\in\partial\Omega$. We use $\n=\n(x_0)$ to denote the inward normal vector of $\partial\Om$ at $x_0$. For any function $u$ on $\Om$ that is continuous near $x\in\partial\Om$, we set $\ov{\frac{\partial u}{\partial \n}}(x)=\limsup_{h\rightarrow0+}\frac{u(x+h\n)-u(x)}{h}$ and $\un{\frac{\partial u}{\partial \n}}(x)=\liminf_{h\rightarrow0+}\frac{u(x+h\n)-u(x)}{h}$.

We recall the definition of solutions to the $p$-Laplace equation. For more details, one may refer to \cite{HKMbooknonlinearpotentialtheory} or \cite{Lindqvistbook}.

We say a function $h\in W^{1,p}_{loc}(\Omega)$ is a (weak) solution to the equation $\nabla\cdot(|\nabla h|^{p-2}\nabla h)=0$ if for any $\varphi\in C_c^\infty (\Omega)$, it holds that
\begin{equation*}
    \int_\Omega |\nabla h|^{p-2}\nabla h\cdot\nabla\varphi=0.
\end{equation*}

To deal with non-continuous boundary conditions, we need the notion of Perron solutions.

We say a function $u:\Omega\rightarrow\R$ is \textbf{$p$-superharmonic} in $\Omega$ if $u$ is lower semicontinuous and the following conditions hold: For any bounded open $D$ with $\ov D\subset\Omega$ and each continuous function $h\in C(\ov D)\cap W^{1,p}_{loc}(D)$ that solves $-\nabla\cdot(|\nabla h|^{p-2}\nabla h)=0$ weakly, one has: if $h\leq u$ on $\partial D$, then $h\leq u$ in $D$.

A function $u$ is said to be \textbf{$p$-subharmonic} if $-u$ is $p$-superharmonic.

For any function $F$ on $\partial\Omega$, we let $\mathcal U_F$ be the class of  $p$-superharmonic functions $u$ in $\Omega$ that are bounded below and satisfy $\liminf_{x\rightarrow x_0} u(x)\geq F(x_0)$ for all $x_0\in\partial\Omega$. Similarly, $\mcl L_F$ is the class $p$-subharmonic functions $u$ in $\Omega$ that are bounded above and satisfy$\limsup_{x\rightarrow x_0} u(x)\leq F(x_0)$ for all $x_0\in\partial\Omega$. We define the \textbf{upper and lower Perron solutions} respectively as

\begin{equation}
    \ov H_F=\inf_{u\in\mcl U_F} u.
\end{equation}
\begin{equation}
    \underline H_F=\sup_{u\in\mcl L_F} u.
\end{equation}

In our case, the boundary $\partial \Omega$ is Lipschitz, and hence satisfies the exterior cone condition. In particular, for any continuous boundary values, there exists a continuous $p$-harmonic extension to $\Omega$. Therefore, $\Omega$ is regular in the sense of \cite[9.5]{HKMbooknonlinearpotentialtheory}. In particular, any upper (or lower) semicontinuous boundary values $F$ are resolutive. We have $\ov H_F=\underline H_F$. In this case, we say $u=\ov H_F=\underline H_F$ is the \textbf{$p$-harmonic function with Dirichlet boundary condition $F$}. Furthermore, $u$ is continuous near every point of continuity of $F$. Note that this definition coincides with the classical definition if $F$ is continuous.

We also use $B^m(x,R)$ for $x\in\R^m$ and $R>0$ to denote the $m$-dimensional ball centered at $x$ with radius $R$. When $m=d$, we also write $B(x,R)=B^d(x,R)$.

\subsection{Proof of main results}
\begin{proof}[Proof of Theorem \ref{thm.explosion}]
    For simplicity, we assume $x_0=0$ and $\n(0)=e_1$ throughout the proof.  

    \textbf{Lower bound in (b):}
    
    Let $R$ be the constant in the assumption. Then we choose a large constant $M>2 \diam(\Om)$. Let $v$ be the unique $p$-harmonic function on $\Om'=B(-Re_1,R+M)\setminus\ov B(-Re_1,R)$ with boundary condition $v=0$ on $\partial B(-Re_1,R)$ and $v=1$ on $\partial B(-Re_1,R+M)$. See Figure \ref{fig.explosiontest1}. More precisely, 
    \begin{equation*}
        v(x)=\frac{|x-Re_1|^{-\gamma}-R^{-\gamma}}{(R+M)^{-\gamma}-R^{-\gamma}},
    \end{equation*} 
    where recall that $\gamma=\gamma(p,d)=\frac{d-p}{p-1}$. Then we restrict $v$ to $\Om\cap \Om'$ and have that $v\leq u$ on $\partial(\Om\cap\Om')$. Then it follows from the comparison principle that $v\leq u$ in $\Om\cap\Om'$. In particular, $\un{\frac{\partial u}{\partial e_1}}(0)\geq \frac{\partial v}{\partial e_1}(0)\geq \frac{c}{p-1}$ for $c=\frac{10d}R$ when $p$ is close to $1$.

    \begin{figure}[!h]
        \centering
        \includegraphics[width=0.7\linewidth,trim=650 1450 650 0,clip]{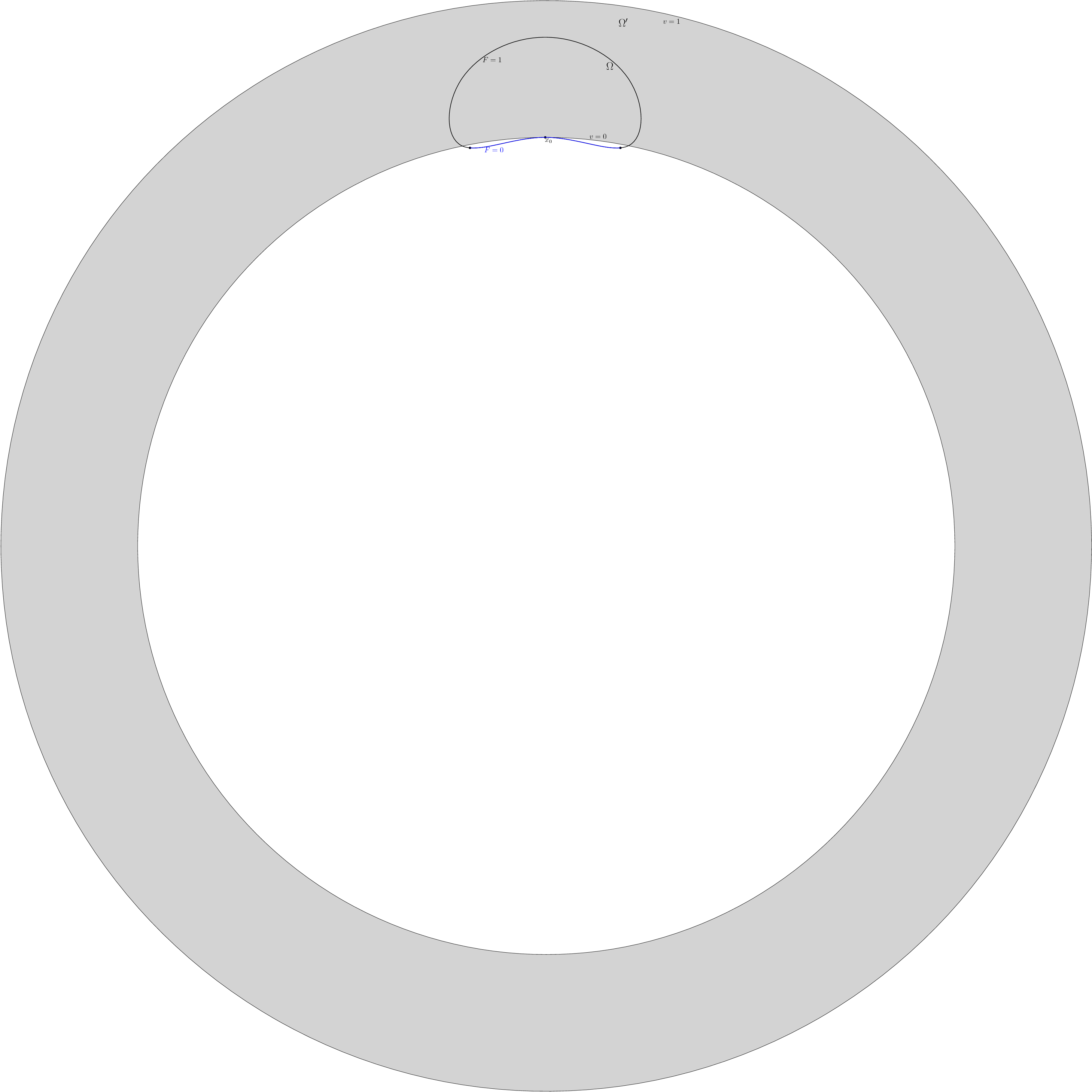}
        \caption{The test function $v$ for the lower bound in (b).}
        \label{fig.explosiontest1}
    \end{figure}

    \textbf{Upper bound in (a):}
    
    We then show an upper bound for any general domain. The strategy is similar. Since the boundary is locally $C^2$, it satisfies the exterior ball condition; hence we can find a ball $B(-\delta e_1,\delta)$ such that $\ov B(-\delta e_1,\delta)\cap\ov\Om =\{0\} $. We then pick $\delta'$ small enough so that $\partial\Om\cap\{F=1\}\cap B(-\delta e_1,\delta+\delta')=\emptyset$. Again we let $w$ be the unique $p$-harmonic function on $\Om''=B(-\delta e_1,\delta+\delta')\setminus\ov B(-\delta e_1,\delta)$ with boundary condition $w=0$ on $\partial B(-\delta e_1,\delta)$ and $w=1$ on $\partial B(-\delta e_1,\delta+\delta')$. See Figure \ref{fig.explosiontest2}. Since $w\geq u$ on $\partial (\Om\cap\Om'')$, we conclude that $\ov{\frac{\partial u}{\partial e_1}}(0)\leq \frac{\partial w}{\partial e_1}(0)\leq \frac{C}{p-1}$, with the choice $C=\frac{d}{10\delta}$ for $p$ close to $1$.
    \begin{figure}[!h]
        \centering
        \includegraphics[width=0.7\linewidth,trim=0 260 0 0,clip]{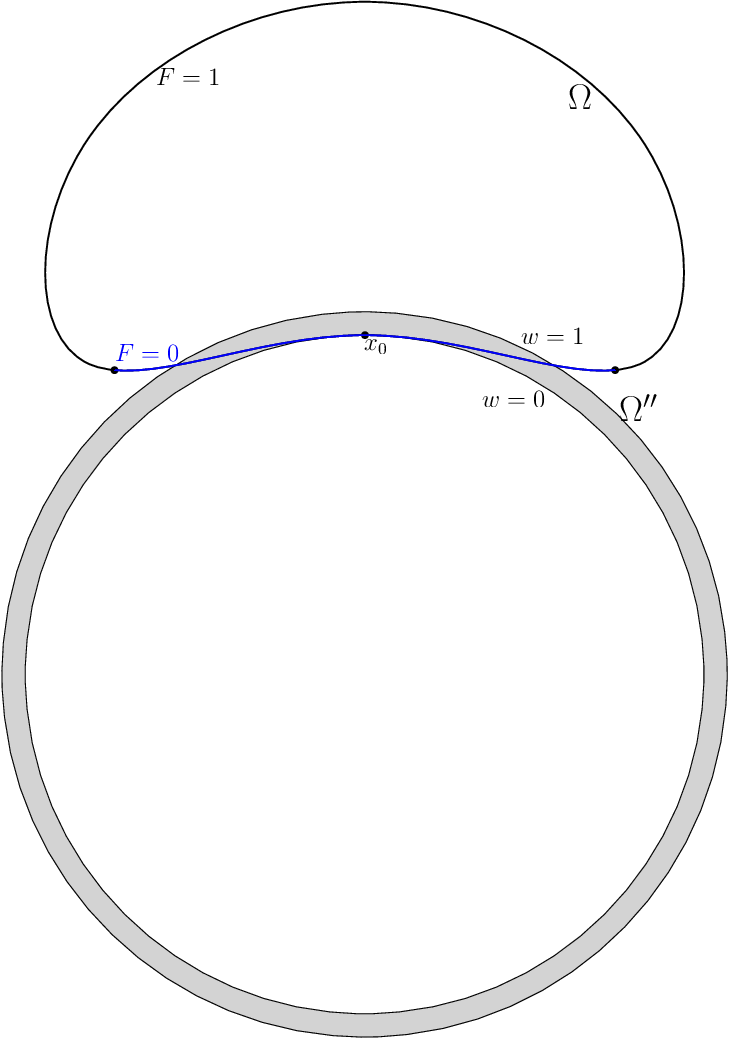}
        \caption{The test function $w$ for the upper bound in (a).}
        \label{fig.explosiontest2}
    \end{figure}
\end{proof}

\begin{proof}[Proof of (a) of Theorem \ref{thm.exponentialdecay}]
    
    In this proof, it is convenient to assume $\xi=-e_1,\beta=0$ by a rotation and a translation. In other words, $\{F=1\}\subset\{x_1>0\}$ and $x_0\in\{x_1<0\}$.

    Since $\Omega$ is bounded, we can find $y\in\Rd$ and $R>0$ so that $\partial B(y,R)$ also separates $x_0$ and $\{F=1\}$, i.e., $\{F=1\}\subset B(y,R)$ and $x_0\notin \ov B(y,R)$. In particular, there exists $\ep>0$ such that $x_0\notin \ov B(y,R+\ep)$. We also find $R'$ sufficiently large so that $\Omega\subset B(y,R')$. 

    Let $$v(x)=\frac{|x-y|^{-\gamma}-(R')^{-\gamma}}{R^{-\gamma}-(R')^{-\gamma}},$$ which is $p$-harmonic in $\Omega_1=B(y,R')\setminus\ov B(y,R)$. Then we observe that $v\geq u$ on $\partial(\Omega_1\cap\Omega)$. Therefore, by the comparison principle, we have $u(x)\leq v(x)$ in $\Omega_1\cap\Omega$. In particular, for any $x\in\partial B(y,R+\ep)\cap \Omega$, we have
    \begin{equation}\label{eq.upperboundu}
        u(x)\leq\frac{(R')^{-\gamma}-(R+\ep)^{-\gamma}}{(R')^{-\gamma}-R^{-\gamma}}.
    \end{equation}
    .

    We then consider the $p$-harmonic function $w$ on $\Omega_2=\Omega\setminus \ov B(y,R+\epsilon)$ with Dirichlet boundary condition $w=0$ on $\partial\Omega \setminus \ov B(y,R+\epsilon)$ and $w=1$ on $\partial B(y,R+\epsilon)\cap \Omega$. By Theorem \ref{thm.explosion}, we know there exists $C$ such that $\ov{\frac{\partial w}{\partial \n}}(x_0)\leq \frac{C}{p-1}.$ Also, from \eqref{eq.upperboundu}, we know $u\leq \frac{(R')^{-\gamma}-(R+\ep)^{-\gamma}}{(R')^{-\gamma}-R^{-\gamma}} w$ on $\partial \Omega_2$. Therefore,
    \begin{equation*}
        \ov{\frac{\partial u}{\partial \n}}(x_0)\leq \frac{(R')^{-\gamma}-(R+\ep)^{-\gamma}}{(R')^{-\gamma}-R^{-\gamma}}\frac{C}{p-1}\leq \exp\left(-\frac{C'}{p-1}\right),
    \end{equation*}
    for $C'=C'(R,\epsilon,d)=10d\log(1+\ep/R)>0$ and for $p$ sufficiently close to $1$, which is the desired estimate.
    
\end{proof}

\section{An exponential lower bound via tug-of-war}\label{sec:proof2}

For the proof of an exponential lower bound, we consider the probabilistic approach developed in \cite{PS08}. We estimate the value of $p$-harmonic functions by calculating the hitting probability of the tug-of-war game. This allows us to use some probabilistic methods. In particular, we use a perturbation of the probability measure to give lower bounds. 

\begin{lemma}\label{lem.cylinderlowerbound}
    Suppose $0<r<R$ and $0<h<H$ are fixed real numbers. Let $Q=\{(x,y):\ x\in B^{d-1}(0,R),\ y\in(0,H)\}$. Let $u$ be the $p$-harmonic function for $1< p\leq 2$ in $Q$ with the Dirichlet boundary condition 
    \begin{equation}\label{eq.cylinender001boundary}
        u(x,y)=\begin{cases}
            0,&y=0\ \text{or}\ |x|=R,
            \\1&y=H\ \text{and}\ |x|<R,
        \end{cases}
    \end{equation}
    as indicated in Figure \ref{fig:cylinderlemma}. Then there exists a finite positive constant $C=C(r,R,h,H)$ such that for any $x_0$ with $|x_0|\leq r$, we have 
    \begin{equation}
        u(x_0,h)\geq \exp\left(-\frac{Cd}{p-1}\right).
    \end{equation}
\end{lemma}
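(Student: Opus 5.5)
We will prove the bound with the tug-of-war with noise representation of \cite{PS08}. For $1<p\le 2$ set $\alpha,\beta\ge 0$ with $\alpha+\beta=1$ and $\beta/\alpha$ proportional to $(p-1)/(d+2)$; more precisely $\alpha=\frac{p-1}{p+d}$ and $\beta=\frac{d+2-p}{p+d}$ up to the normalization convention of \cite{PS08}. Consider the game with step size $\ep$. At each step a fair coin with bias $(\alpha,\beta)$ decides between a tug-of-war move, where each player wins with probability $\alpha/2$ and moves the token by at most $\ep$, and a random noise step uniform in $B(\cdot,\ep)$. As $\ep\to0$ the game values converge to the $p$-harmonic function with the given boundary data. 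Since the data \eqref{eq.cylinender001boundary} is upper semicontinuous, we may approximate it from below by continuous data to use the convergence. It therefore suffices to exhibit, for every small $\ep$, a strategy for the maximizing player (who wants to exit through the top $\{y=H\}$) that guarantees exit through the top with probability at least $\exp(-Cd/(p-1))$, whatever the opponent does.

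\textbf{Strategy.} The maximizer always pulls in the $+y$ direction by $\ep$. Consider the process $Y_k$, the $y$-coordinate. On a tug-of-war round it moves $+\ep$ with probability at least $\alpha/2$ and at worst $-\ep$ with probability $\alpha/2$, so its drift from tug-of-war is nonnegative. On a noise round it makes a symmetric mean-zero step. To ensure exit through the top rather than the sides or bottom, we need a positive-probability event. We use a change of measure: compare the law of the game path with the law of a process in which every tug-of-war round is won by the maximizer. Under that tilted law the tug-of-war part contributes a drift of $\alpha\ep$ per step in $+y$. The horizontal coordinate is a martingale, perturbed by adversarial horizontal moves only when the minimizer wins, which does not happen under the tilted law.

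\textbf{Tilted process.} Under the tilted law, the path to go from height $h$ to height $H$ takes about $N\approx (H-h)/(\alpha\ep)$ steps. The horizontal displacement is a noise random walk with variance of order $N\beta\ep^2/d\approx (H-h)\ep/\alpha$ per coordinate... This tends to $0$ with $\ep$. So the tilted process exits at the top without hitting $|x|=R$ or $y=0$ with probability at least $1/2$, using $R-r>0$ and concentration. The cost of the tilt is the likelihood ratio. Each tug-of-war round won by the maximizer has probability $\alpha/2$ under the game versus $\alpha$ under the tilt, a factor of $1/2$. This is too costly over $N\sim\alpha^{-1}\ep^{-1}$ rounds, since it gives $2^{-\alpha N}\approx 2^{-(H-h)/\ep}$, which degenerates as $\ep\to0$.

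\textbf{Main obstacle and fix.} The naive tilt fails, so the key step is a smarter measure transformation. Only the \emph{net} drift matters: of the roughly $\alpha N$ tug-of-war rounds, we need the maximizer to win just an excess fraction, not all of them. Better, we tilt the noise. Under the game law, the $y$-coordinate is a submartingale whose noise increments have variance $\beta\ep^2/(d+2)$ per step. We tilt the noise steps by an exponential factor $e^{\lambda \Delta Y}$ with $\lambda$ chosen so that the drift per step is of order $\ep\cdot$const. The Radon--Nikodym cost is then $\exp(-\lambda (H-h) + O(\lambda^2 \cdot\text{variance}\cdot N))$. This is $\exp(-O(1/\text{variance ratio}))$, and optimizing it is exactly a Girsanov computation. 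Concretely: a diffusion with generator $\frac{\beta\ep^2}{2(d+2)}\Delta$ plus adversarial drift of size $\alpha\ep$ is, in the limit, Brownian motion with diffusivity $\sigma^2\asymp (p-1)/d$ relative to the drift scale. Moving a distance $H-h$ in the $y$ direction while staying in a tube of radius $R-r$ costs $\exp(-c(H-h)^2/(\sigma^2 T))$ in Girsanov/large-deviation (Freidlin--Wentzell) terms, with $T$ a fixed time. An adversary of bounded drift can only be compensated at cost $\exp(-CT/\sigma^2)$ as well. This yields $\exp(-Cd/(p-1))$, with $C$ depending on $r,R,h,H$.

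\textbf{Remaining work.} We carry this out at the discrete level uniformly in $\ep$, using the maximizer pulling upward (in the $+y$ direction) together with a change of measure on the noise steps, conditioned to stay in the tube. The main technical work is the uniform-in-$\ep$ bound on the Radon--Nikodym derivative, controlled via the exponential martingale $\exp(\lambda Y_k-\Lambda(\lambda)k)$.
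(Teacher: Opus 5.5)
\textbf{There is a genuine gap, and it is in the strategy.} Suppose the maximizer always pulls in the $+y$ direction. The opponent then simply pulls sideways toward $\{|x|=R\}$. The mean displacement per step is of order $\ep$ both upward and sideways, while the fluctuation per step is also $O(\ep)$ for fixed $p$. So over the $\asymp 1/\ep$ steps needed to climb $H-h$, the token moves essentially deterministically along a diagonal. It hits the side wall first whenever $H-h$ exceeds the horizontal distance to that wall.

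No change of measure repairs this uniformly in $\ep$. Compensating a per-step drift of order $\ep$ by tilting the coins or a per-step noise of order $\ep$ costs a factor $e^{-\Theta(1)}$ per step, so the resulting bound is $e^{-c_p/\ep}\to0$. Your Girsanov heuristic conceals this. After rescaling time so that a drift of $\alpha\ep$ per step becomes $O(1)$, the noise has diffusivity $O(\ep)$, not $\asymp(p-1)/d$.

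The game itself is also wrong for $1<p\le 2$. With tug-of-war of probability $\alpha$, uniform noise in $B(\cdot,\ep)$ of probability $\beta$, and $\alpha+\beta=1$, matching second-order terms gives $\beta\,\Delta u+\alpha(d+2)\langle D^2u\,\nu,\nu\rangle=0$ with $\nu=\nabla u/|\nabla u|$. This forces $\alpha=\frac{p-2}{p+d}<0$. Your formulas for $\alpha,\beta$ produce neither the $p$-Laplacian nor your own stated ratio $\beta/\alpha\propto(p-1)/(d+2)$. You need the game of \cite{PS08}, where the noise is orthogonal to the chosen move $v$ and has size $\sqrt{\frac{d-1}{p-1}}\,|v|$. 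There the tug-of-war direction carries a diffusivity $p-1$ times smaller than each noise direction, and that is the source of $\frac{1}{p-1}$ in the exponent.

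\textbf{What is missing is a strategy that neutralizes \emph{arbitrary} opponent moves, combined with a tilt of the coins only.} In the paper, given Player II's mixed strategy $h_n$, Player I plays the reflected measure $\bar h_n(A)=h_n(-A)$ with probability $\frac{1-2c\ep/(p-1)}{1+2c\ep/(p-1)}$ and pushes by $\ep e_d$ otherwise. The measure $\tilde\P$ lets Player I win each toss with probability $\frac12+\frac{c\ep}{p-1}$ and leaves the noise unchanged.
\begin{itemize}
\item Under $\tilde\P$, the opponent's move and its mirror image each occur with probability $\frac12-\frac{c\ep}{p-1}$ and cancel in law.
\item Hence $Y_n-\frac{2c\ep^2n}{p-1}$ is a martingale and $|\rho_n|^2-\frac{d\ep^2n}{p-1}$ is a supermartingale up to $\tau$.
\item With $c\asymp d$, the token exits through the top within $T_0\asymp\frac{p-1}{c\ep^2}$ steps with $\tilde\P$-probability bounded below.
\item The per-toss bias is only $O(\ep/(p-1))$. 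So the martingale $\bigl(1+(\frac{2c\ep}{p-1})^2\bigr)^{-n}\frac{\d\tilde\P}{\d\P}$ bounds the cost of the tilt on this event by $\bigl(1+(\frac{2c\ep}{p-1})^2\bigr)^{T_0}\le e^{O(c/(p-1))}=e^{O(d/(p-1))}$, uniformly in $\ep$.
\end{itemize}

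Your remark that the maximizer need only win an excess fraction of the tosses is exactly the right tilt. It works only together with mirroring, though; under ``always pull up'' it does nothing against the opponent's sideways pulls.

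A minor point: the boundary datum is the indicator of the relatively open top disc, so it is lower, not upper, semicontinuous. Approximating it from below by continuous data is still the right step.
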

\begin{figure}[!h]
    \centering
    \includegraphics[scale=0.5]{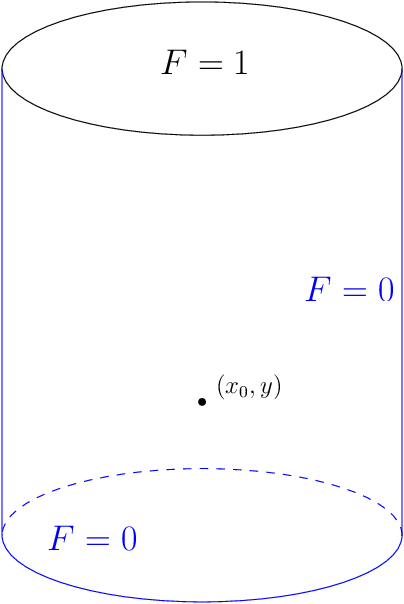}
    \caption{The boundary condition in Lemma \ref{lem.cylinderlowerbound}.}
    \label{fig:cylinderlemma}
\end{figure}
\begin{proof}
    It suffices to assume $x_0=0$, since for general $x_0$, we can apply the result to the $p$-harmonic function in $Q_1=B^{d-1}(x_0,R-r)\times (0,H)$ and use the comparison principle. Furthermore, we can assume $R=1$ after replacing $H$ and $h$ by $H/R$ and $h/R$.
    
    By the comparison principle, we may prove the estimate for another $p$-harmonic function $v$ with a continuous boundary condition $F:\partial Q\rightarrow \R$, where
    \begin{equation}\label{eq.continuousboundarycondition}
        F(x,y)=\begin{cases}
            0,&y=0\ \text{or}\ |x|=1,
            \\\min\{1,10-10|x|\},&y=H\ \text{and}\ |x|<1.
        \end{cases}
    \end{equation}

    Now we consider the tug-of-war with noise on $Q=B^{d-1}(0,1)\times (0,H)$, with initial state $(0,h)$, boundary condition $F$, and sufficiently small step size $\epsilon$. To be more precise, we consider the stochastic zero-sum game $(X_n)_{n\geq 0}$ between two players, described below. The initial value is $X_0=(0,h)\in Q$. To determine $X_{n+1}$ from $X_{n}$, we toss a fair coin between the two players, and the winning player may choose a vector $v_{n+1}\in B(0,\epsilon)$. Then a random noise vector $w_{n+1}$ perpendicular to $v_n$. with $|w_{n+1}|=\sqrt\frac{d-1}{p-1}|v_n|$ is drawn uniformly. The position is moved to $X_{n+1}=X_n+v_{n+1}+w_{n+1}$. We continue playing the game until the stopping time 
    \begin{align*}
     \tau:=\inf\left\{n\geq 0:  d(X_n,\partial Q)<\left(\sqrt\frac{d-1}{p-1}+1\right)\epsilon\right\}.   
    \end{align*}
    Then the winner of the next coin toss may choose a boundary point
    \begin{equation*}
        {X_{\tau+1}\in B\left(X_\tau,\left(\sqrt\frac{d-1}{p-1}+1\right)\epsilon\right)\cap\partial Q} 
    \end{equation*}
    and end the game.
    
    Let $\mathcal M_1(\cdot)$ be the space of Borel probability measures on a space. A strategy for a player is a collection of functions $(h_n)_{n\geq 1}$, where $h_n: B(0,\epsilon)^{n-1}\times B\left(0,\sqrt\frac{d-1}{p-1}\epsilon\right)^{n-1}\rightarrow \mathcal M_1(B(0,\epsilon))$ determines what the player will do (i.e. a probability measure on $B(0,\epsilon)$) given the moves and noises up to time $n-1$. Then for each pair of strategies $(S_I,S_{II})$, there exists a probability measure $\P$ on a suitable space that determines the process. And the game value (for player II) is defined as
    \begin{equation*}
        v_2^\epsilon =\inf_{S_{II}}\sup_{S_I}\E(F(X_{\tau+1}) \mathbf{1}_{\tau<\infty}).
    \end{equation*}
     Furthermore, as shown in \cite{PS08}, under this choice of parameters, we have $\lim_{\epsilon\rightarrow0} v_2^\epsilon=v(0,h)$, where $v$ is the $p$-harmonic function described by boundary condition \eqref{eq.continuousboundarycondition}. Here the continuity of $F$ is necessary. For more details, see \cite{PS08}.

    This probabilistic description allows us to consider the stochastic game instead of the original $p$-harmonic function. To give a lower bound on the value of player II, we need to define a good strategy $S_I$ given any strategy $S_{II}$ of player II. Let $c>0$ be a fixed constant to be determined later, and define the counter strategy as follows. Given $S_{II}=(h_n)_{n\geq 1}$, we define the probability measures
    \begin{equation*}
        \bar h_n (A)=h_n(-A), \text{ for all } A\in\mathcal B(\R^d).
    \end{equation*}
    \begin{equation*}
        g_n=\frac{1-\frac{2c\epsilon}{p-1}}{1+\frac{2c\epsilon}{p-1}} \bar h_n+ \frac{\frac{4c\epsilon}{p-1}}{1+\frac{2c\epsilon}{p-1}} \delta_{\epsilon e_d}.
    \end{equation*}
    Here $\bar h_n$ is the strategy opposite to $h_n$, and $\delta_{\epsilon e_d}$ (a dirac measure) is just the strategy that always pushes in the $d$-th coordinate direction. The expression denotes the convex combination of two probability measures. In other words, the strategy $g_n$ means that with probability $\frac{1-\frac{2c\epsilon}{p-1}}{1+\frac{2c\epsilon}{p-1}}$, we choose a direction opposite to the opponent's strategy, while otherwise we just move $\epsilon$ in the $d$-th direction. We will show that with this counter strategy $S_I=(g_n)$, the probability of reaching $B^{d-1}(0,1)\times\{H\}$ is bounded below.

    The basic idea to show this lower bound is to consider the biased process. First, we will show that we will reach the upper boundary with constant probability if we slightly perturb the process. 

    Fixing the strategy pair $(S_I,S_{II})$ described above, we denote $\P$ as the probability measure that determines the process $(X_n)$. We consider another probability measure $\tilde \P$ on the same probability space, which assigns player I winning probability $\frac{1}{2}+\frac{c\epsilon}{p-1}$ rather than $\frac{1}{2}$ during each coin toss, while keeping randomness from noises and strategies unchanged.

    Under this new probability measure $\tilde \P$, during each step, the probability that player II moves is $\frac{1}{2}-\frac{c\epsilon}{p-1}$. The probability that player I moves a vector opposite to player II's strategy is also $\frac{1}{2}-\frac{c\epsilon}{p-1}$. The probability that player I moves upward by $\epsilon$ directly is $\frac{2c\epsilon}{p-1}$. Also note that all the noises are unbiased. Therefore, if we denote $$X_n=(\rho_n,Y_n),$$ where $\rho_n\in B^{d-1}(0,1)$ is the first $d-1$ coordinates of $X_n$ and $Y_n$ is the $d$-th coordinate of $X_n$, then under $\tilde \P$, 
    \begin{equation}
        M_n=Y_n-\frac{2c\epsilon^2n}{p-1}
    \end{equation}
    is a martingale up to time $\tau=\inf\left\{n\geq 0,\ d(X_n,\partial Q)<\left(\sqrt\frac{d-1}{p-1}+1\right)\epsilon\right\}$ with respect to the natural filtration $(\F_n)$ of the process. In other words, $M_{n\wedge\tau}$ is a martingale with respect to $(\mcl F_n)$.

    Furthermore, we also compute $\tilde \E [|\rho_{n+1}|^2|\F_n]$. We denote the first $d-1$ coordinates of the move $v_n$ by $\xi_n$ and the first $d-1$ coordinates of the noise $w_n$ by $\eta_n$. Up to time $\tau$, we have
    \begin{align*}
        \tilde \E [\rho^2_{n+1}|\F_n]&=\tilde \E [(\rho_n+\eta_{n+1}+\xi_{n+1})^2|\F_n]
        \\&=\tilde\E[\rho_n^2+\xi_{n+1}^2-2\rho_n\cdot\xi_{n+1} +\eta_{n+1}^2-2(\rho_n+\xi_{n+1})\cdot \eta_{n+1} |\F_n]
        \\&=\rho_n^2+\tilde\E[\xi_{n+1}^2]+\tilde \E[\eta_{n+1}^2]
        \\&\leq \rho_n^2 + \frac{d-1}{p-1} \epsilon^2+\epsilon^2
        \\&\leq \rho_n^2+\frac{d}{p-1}\epsilon^2.
    \end{align*}
    Here from the second line to the third line, we observe that $(\xi_{n+1},\eta_{n+1})$ are independent of $\F_n$. We also make use of the fact that $(\rho_n,\xi_{n+1})$ has the same law as $(\rho_n,-\xi_{n+1})$; $(\rho_n,\eta_{n+1},\xi_{n+1})$ has the same law as $(\rho_n,-\eta_{n+1},\xi_{n+1})$. The calculation above means 
    \begin{equation*}
        N_n=\rho_n^2-\frac{d\epsilon^2n}{p-1}
    \end{equation*}
    is a supermartingale up to time $\tau$ under $\tilde\P$.

    We first apply the optional stopping theorem to $M$ to obtain
    \begin{equation*}
        \tilde \E M_\tau=\tilde \E Y_\tau-\frac{2c\epsilon^2}{p-1}\tilde \E\tau=h.
    \end{equation*}
    In particular, for $\epsilon$ sufficiently small, 
    \begin{equation}\label{eq.downescape}
        \tilde \P \left[Y_\tau\leq2\left(\frac{d-1}{p-1}+1\right)\epsilon\right] \leq \frac{H-\tilde\E Y_\tau}{H-2\left(\frac{d-1}{p-1}+1\right)\epsilon}\leq\frac{H-h}{H-\frac{h}{2}}=:1-C_1(H,h).
    \end{equation}
    Also, if we denote
    \begin{equation*}
        T_0=\frac{H(p-1)}{C_1(H,h)c\epsilon^2},
    \end{equation*}
    then we have
    \begin{equation}\label{eq.longtime}
        \begin{split}
        \tilde\P \left[\tau>T_0\right] &\leq T_0^{-1} \tilde\E \tau \leq  
        \frac{H(p-1)}{2c\ep^2}\left(\frac{H(p-1)}{C_1(H,h)c\epsilon^2}\right)^{-1}\leq \frac{C_1}{2}.
        \end{split}
    \end{equation}

    Now we apply the optional stopping theorem to $N$ to obtain 
    \begin{equation*}
        \tilde\E N_{\tau\wedge T_0}=\tilde \E\rho_{\tau\wedge T_0}^2-\frac{d\epsilon^2}{p-1}\tilde \E [\tau\wedge T_0]\leq 0.
    \end{equation*}
    In particular, 
    \begin{equation*}
        \tilde \E\rho_{\tau\wedge T_0}^2\leq \frac{d\ep^2}{p-1}T_0\leq \frac{Hd}{C_1c}.
    \end{equation*}
    Therefore, 
    \begin{equation*}
        \tilde\P\left[\tau<T_0,\ \rho_\tau^2>\frac{1}{2}\right]\leq \tilde\P\left[\rho^2_{\tau\wedge T_0}\geq \frac{1}{2}\right]\leq2\tilde\E\rho^2_{\tau\wedge T_0}\leq \frac{2Hd}{C_1c}.
    \end{equation*}
    With the choice of $c=c(d,H,h)=\frac{8Hd}{C_1^2}$, we have
    \begin{equation}\label{eq.sideescape}
        \tilde\P\left[\tau<T_0,\ \rho_\tau^2>\frac{1}{2}\right]\leq\frac{C_1}{4}.
    \end{equation}

    Also, to control the Radon-Nikodym derivative between the two measures, we  consider the number of steps the two players take. We define 
    \begin{equation*}
        Z_n=\begin{cases}
            1,\ &\text{if player I wins the $n$-th coin toss,}
            \\-1,\ &\text{if player II wins the $n$-th coin toss.}
        \end{cases}
    \end{equation*}
    Define
    \begin{equation*}
        S_n=\sum_{i=1}^n Z_i.
    \end{equation*}

    The Radon-Nikodym derivative between the two probability measures can be written as a function of $(Z_i)_{i\geq 1}$. More precisely, we have
    \begin{equation*}
        f_t(Z):=\frac{\d \tilde\P}{\d \P}=\left(\frac{\frac{1}{2}+\frac{c\epsilon}{p-1}}{\frac{1}{2}}\right)^\frac{t+S_t}{2}\left(\frac{\frac{1}{2}-\frac{c\epsilon}{p-1}}{\frac{1}{2}}\right)^\frac{t-S_t}{2}=\left(\frac{1+\frac{2c\epsilon}{p-1}}{1-\frac{2c\epsilon}{p-1}}\right)^\frac{S_t}{2}\left(1-\left(\frac{2c\epsilon}{p-1}\right)^2\right)^\frac{t}{2}.
    \end{equation*}

    A direct calculation shows that 
    \begin{equation*}
        \tilde \E[f_{t+1}(Z)|\F_t]=\frac{1}{2}\left(1+\frac{2c\epsilon}{p-1}\right)^2f_t(Z)+\frac{1}{2}\left(1-\frac{2c\epsilon}{p-1}\right)^2f_t(Z) =\left(1+\left(\frac{2c\epsilon}{p-1}\right)^2\right)f_t(Z).
    \end{equation*}
    This means 
    \begin{equation*}
        Q_n=\left(1+\left(\frac{2c\epsilon}{p-1}\right)^2\right)^{-n}f_n(Z)
    \end{equation*}
    is a positive martingale under $\tilde\P$. In particular, by a standard stopping time argument, we have 
    \begin{equation*}
        \tilde\P \left[\max_{0\leq n\leq T_0}Q_n\geq\frac{8}{C_1}\right]\leq \tilde \P\left[Q_{\nu\wedge T_0}\geq\frac{8}{C_1}\right]\leq \frac{C_1}{8},
    \end{equation*}
    where $\nu$ is the first time $Q_n$ exceeds $\frac{8}{C_1}$.

    Combining this with \eqref{eq.downescape}, \eqref{eq.longtime} and \eqref{eq.sideescape}, we conclude with 
    \begin{equation*}
        \tilde\P\left[\tau<T_0,\ \rho_\tau^2\leq\frac{1}{2},\ Y_\tau>2\left(\frac{d-1}{p-1}+1\right)\epsilon,\ \max_{0\leq n\leq T_0}Q_n<\frac{8}{C_1}\right]\geq\frac{C_1}{8}.
    \end{equation*}

    Finally, we compute the probability of this event under the original probability measure. Note that under this event, 
    \begin{equation*}
        \frac{\d \tilde\P}{\d \P}\leq \left(1+\left(\frac{2c\epsilon}{p-1}\right)^2\right)^{T_0}\max_{0\leq n\leq T_0}Q_n\leq \frac{8}{C_1}\left(1+\left(\frac{2c\epsilon}{p-1}\right)^2\right)^{T_0}.
    \end{equation*}
    Combining the two displays above, we have
    \begin{equation*}
        \P\left[\tau<T_0,\ \rho_\tau^2\leq\frac{1}{2},\ Y_\tau>2\left(\frac{d-1}{p-1}+1\right)\epsilon,\ \max_{0\leq n\leq T_0}Q_n<\frac{8}{C_1}\right]\geq \left(1+\left(\frac{2c\epsilon}{p-1}\right)^2\right)^{-T_0}\frac{C_1^2}{64}.
    \end{equation*}
    
    Therefore, we have
    \begin{equation}
        \begin{split}
            &\P\left[X_{\tau+1}\in B^{d-1}\left(0,\frac{9}{10}\right)\times\{H\},\tau<\infty\right]
        \\\geq&\P\left[\tau<T_0,\ \rho_\tau^2\leq\frac{1}{2},\ Y_\tau>2\left(\frac{d-1}{p-1}+1\right)\epsilon,\ \max_{0\leq n\leq T_0}Q_n<\frac{8}{C_1}\right]
        \\\geq&C'\left(1+\left(\frac{2c\epsilon}{p-1}\right)^2\right)^{-\frac{C(p-1)}{\epsilon^2}}
        \end{split}
    \end{equation}
    for some constants $C,C',c$ depending only on $d,H,h$.
    
    Since the estimate is true for any strategy $S_{II}$ and the described strategy $S_I$, we have
    \begin{equation*}
        v_2^\epsilon \geq\inf_{S_{II}}\sup_{S_I}\P \left(X_{\tau+1}\in B^{d-1}\left(0,\frac{9}{10}\right)\times\{H\},\ \tau<\infty\right)\geq C'\left(1+\left(\frac{2c\epsilon}{p-1}\right)^2\right)^{-\frac{C(p-1)}{\epsilon^2}}.
    \end{equation*}
    We pass to the limit to obtain
    \begin{equation*}
    \begin{split}
    v(0,H)&\geq\limsup_{\epsilon\rightarrow0+}C'\left(1+\left(\frac{2c\epsilon}{p-1}\right)^2\right)^{-\frac{C(p-1)}{\epsilon^2}}=C'\exp\left(-\frac{4Cc^2}{p-1}\right),
    \\&\geq \frac{h^2}{64(2H-h)^2}\exp\left(-\frac{32H^2(2H-h)^3d}{h^3(p-1)}\right),
    \end{split}
    \end{equation*}
    which completes the proof.
\end{proof}

Using this special case on a cylinder, we can finish the proof of Theorem \ref{thm.exponentialdecay}. Although there are many details concerning a general domain, the spirit is simple: we construct a finite sequence of cylinders from any point to the target boundary and repeatedly use the estimate in Lemma \ref{lem.cylinderlowerbound}.
\begin{proof}[Proof of (b) in Theorem \ref{thm.exponentialdecay}]
    We first fix a point $y\in\partial\Omega$ such that there exists a ball $B(y,r)$ with $B(y,r)\cap\partial\Omega\subset \{F=1\}$. Shrinking $r$, we may assume $\partial\Omega$ is the graph of a Lipschitz function in $B(y,r)$. In particular, we find a small cylinder $Q\subset B(y,r)$ containing $y$, with its top contained in $\Omega^c$ and its bottom contained in $\Omega$. Fix a point $y'\in\Omega\cap Q$. By the interior ball condition, we also find a small ball $B(x_1,r')$ near $x_0$ such that $$B(x_1,r')\subset\Omega\text{ and } \partial B(x_1,r')\cap \partial\Omega=\{x_0\}.$$ 
    
    We take a piecewise straight path $\gamma=\cup_{i=1}^N[x_i,x_{i+1}]$ joining $x_1$ and $x_{N+1}=y$ in $\Omega$. Here, $[x_i,x_{i+1}]:=\{\lambda x_i+(1-\lambda)x_{i+1},\ \lambda\in(0,1)\}$ denotes the line segment connecting $x_i$ and $x_{i+1}$. 

    Let $\delta=\frac{1}{10}\min(d(\gamma,\partial\Omega),d(y',\partial Q),r'/4)>0$. For $1\leq i\leq N$, we define the cylinder $Q_i$ to be centered at the line segment $\left[x_i-\left(1+\frac{i}{N}\right)\delta\frac{x_{i+1}-x_i}{|x_{i+1}-x_i|},x_{i+1}\right]$ (i.e., with bottom center $x_i-\left(1+\frac{i}{N}\right)\delta\frac{x_{i+1}-x_i}{|x_{i+1}-x_i|}$ and top center $x_{i+1}$) and with radius $\left(1+\frac{i}{N}\right)\delta$. See Figure \ref{fig:cylindersequence}. In particular, $Q_i\subset\Omega$ by our definition of $\delta$.
    \begin{figure}[!h]
        \centering
        \includegraphics[width=0.7\linewidth]{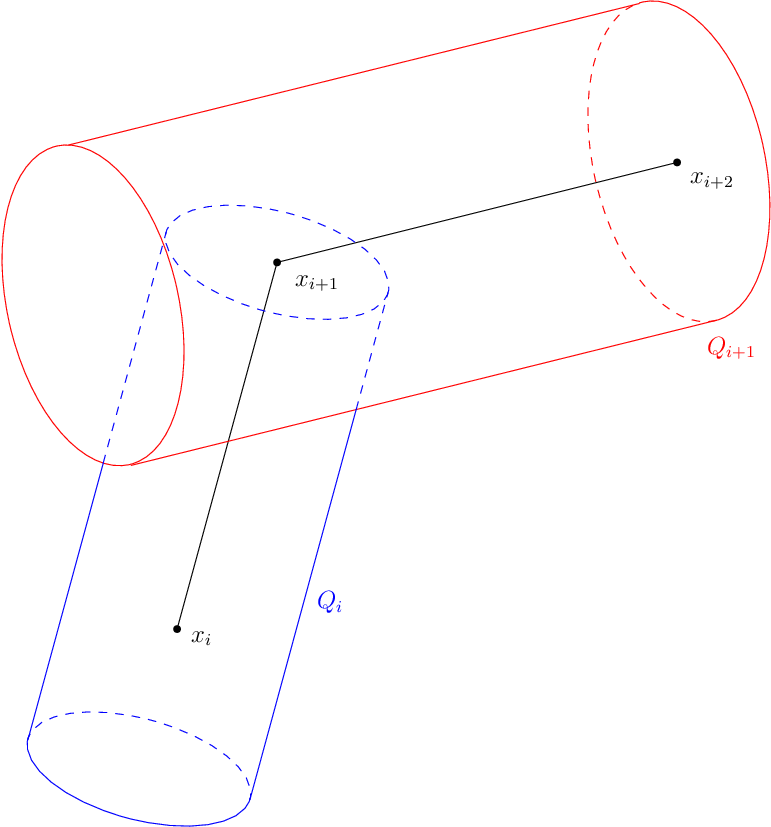}
        \caption{The cylinder $Q_i$ and $Q_{i+1}$.}
        \label{fig:cylindersequence}
    \end{figure}

     We use $\Gamma_i$ to denote the top hypersurface of $\partial Q_i$ (i.e. the flat hypersurface at ${x_{i+1}}$). Then our choice guarantees that $\Gamma_i$ is contained in $Q_{i+1}$, at a distance $\frac{1}{N}\delta$ from its boundary. In particular, Lemma \ref{lem.cylinderlowerbound} applies to $Q_{i+1}$. Let $v_{i+1}$ be the $p$-harmonic function on $Q_{i+1}$ with boundary value $1$ on $\Gamma_{i+1}$ and $0$ elsewhere. Then there exists a constant $C_i$ independent of $p$ such that for $1\leq i\leq N-1$
     \begin{equation}
         v_{i+1}(x)\geq \exp\left(-\frac{C_{i+1}}{p-1}\right),\ \forall x\in\Gamma_{i}.
     \end{equation}
    We also make use of the estimate for $v_1$ in $Q_1$. Since $B(x_1,\delta/2)\subset Q_1$, we have
    \begin{equation}
        v_1(x)\geq \exp\left(-\frac{C_{1}}{p-1}\right),\ \forall x\in B\left(x_1,\frac{\delta}{2}\right).
    \end{equation}
    By the comparison principle on $Q_{i+1}$, we see that for the non-negative $p$-harmonic function $u$, we have for $1\leq i\leq N-1$,
    \begin{equation}
        \inf_{x\in\Gamma_i}u(x)\geq \exp\left(-\frac{C_{i+1}}{p-1}\right)\inf_{x\in\Gamma_{i+1}}u(x).
    \end{equation}
    The estimate is still true when $i=0$ if we replace $\Gamma_0$ by $B\left(x_1,\frac{\delta}{2}\right)$.
    
     Now recall the cylinder $Q$ containing $y'$. We also let $v$ be the $p$-harmonic function in $Q$ with boundary value $v=1$ on top (the surface contained in $\Omega^c$) and $v=0$ elsewhere. Then for the same reason
     \begin{equation}
         v(x)\geq \exp\left(-\frac{C}{p-1}\right),\ \forall x\in\Gamma_{N}.
     \end{equation}
    The comparison principle on $Q\cap\Omega$ implies
    \begin{equation}
        \inf_{x\in\Gamma_N}u(x)\geq \exp\left(-\frac{C}{p-1}\right).
    \end{equation}

    We combine all the estimates above, and show the existence of a constant $C$, depending only on $\Omega$ and $F$, such that
    \begin{equation}
        \inf_{x\in B\left(x_1,\frac{\delta}{2}\right)}u(x)\geq\exp\left(-\frac{C}{p-1}\right).
    \end{equation}
    Finally, we compare $u$ with the fundamental solution in $B(x_1,r')\setminus \overline B\left(x_1,\frac{\delta}{2}\right)$ to see
    \begin{equation}
        \underline{\frac{\partial u}{\partial\n}}(x_0)\geq\exp\left(-\frac{C(\Omega,F)}{p-1}\right).
    \end{equation}
\end{proof}


\section{A critical example}\label{sec:critical}

In this part, we show an example of critical behaviors not contained in the result above. When the conditions in Theorem \ref{thm.explosion} and \ref{thm.exponentialdecay} are not satisfied, the boundary derivative can exhibit a different asymptotic. In our example, we consider the Dirichlet problem on a $(d+1)$-dimensional cylinder.
We show that the asymptotic order of the boundary derivative behaves as $O(\frac{1}{\sqrt{p-1}})$.

Write $z \in \mathbb{R}^{d+1}$ as $z = (x, y)$ with $x \in \mathbb{R}^d, y \in \mathbb{R}$.

Consider the cylinder 
$$
Q = \Bigl\{ (x,y) \in \mathbb{R}^{d+1} \; \mid \; |x|^2 < 1, 0 < y < 1 \Bigr\}.
$$
Let $F:\partial Q\rightarrow\{0,1\}$ be the indicator function
\begin{equation*}
    F(x,y)=\begin{cases}
        1,&y=1\text{ or }|x|=1,
        \\0,&\text{otherwise.}
    \end{cases}
\end{equation*}
See Figure \ref{fig:introcylinder}.

Let $u$ be the $p$-harmonic function in $Q$ with Dirichlet boundary condition $F$. We claim the following asymptotic behavior.

\begin{theorem} \label{thm:p_in_1_2_main}
	Suppose $p \in (1, 2)$. Then 
\begin{align} \label{eq.cyliderbound}
c_1 \sqrt{\frac{d}{p-1}} \cdot y \leq u(0, y) \leq \sqrt{\frac{d}{p-1}} \cdot y, 
\end{align}
for $y \in \Bigl[ 0, \sqrt{\frac{p-1}{d}}\Bigr]$, where $c_1 = 1/40$.
\end{theorem}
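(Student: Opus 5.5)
The plan is to prove both inequalities in \eqref{eq.cyliderbound} by comparison with explicit barriers of the form $g(y)\pm k|x|^2$. For smooth $w$ with $\nabla w\neq 0$ and $p\in(1,2)$,
\[
\Delta_p w=|\nabla w|^{p-2}\Bigl(\Delta w+(p-2)\bigl\langle D^2w\,\nu,\nu\bigr\rangle\Bigr),\qquad \nu=\nabla w/|\nabla w|.
\]
For a quadratic barrier, $D^2w$ is diagonal with eigenvalues $\pm 2k$ (multiplicity $d$) and $g''(y)$. The coefficient $p-2$ lies in $(-1,0)$ and $\langle D^2w\,\nu,\nu\rangle$ lies between the extreme eigenvalues. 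So the sign of $\Delta_p w$ reduces to a one-line inequality, uniformly in the direction of $\nabla w$. We use that minima of supersolutions are supersolutions and maxima of subsolutions are subsolutions. We also use the comparison principle for the Perron solution $u$ on subdomains of $Q$, as in Section~\ref{sec:proof}.

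\textbf{Upper bound.} Take $w=\min\{1,\,g(y)+k|x|^2\}$ with $g(y)=ay-by^2$ for $y\le a/(2b)$ and $g\equiv a^2/(4b)$ afterwards. Set $a^2/(4b)=1$, so $g$ is concave, $C^1$, and reaches $1$ at $y=1/\sqrt b$.

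\begin{itemize}
\item Supersolution check: the worst case is $\nu=e_{d+1}$, which gives $\Delta w+(p-2)\langle D^2w\,\nu,\nu\rangle\le 2kd-2b(p-1)$. This is $\le 0$ once $b\ge kd/(p-1)$.
\item Boundary check: $w\ge 0$ on $\{y=0\}$, and $w= 1$ on the sides (for $k\ge1$) and on the top (since $1/\sqrt b<1$). Hence $u\le w$.
\item Conclusion: $u(0,y)\le ay=2\sqrt b\,y$.
\end{itemize}

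With $k=1$ this gives the slope $2\sqrt{d/(p-1)}$, valid for $y\le\sqrt{(p-1)/d}$. To reach the stated constant $1$, I would replace $k|x|^2$ by a side term that is steeper near $|x|=1$, for instance $\phi(|x|)$ with $\phi(1)\ge1$ but $\phi''$ small near the axis. Another option is to apply the same computation after rescaling the $y$ variable. The normalization needs to be tuned, but the mechanism is clear.

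\textbf{Lower bound.} Here the relevant quadratic has convex $y$-profile and concave $x$-profile: $v=ay+\beta y^2-k|x|^2$. The same eigenvalue computation gives $\Delta v+(p-2)\langle D^2v\,\nu,\nu\rangle\ge -2kd+2\beta(p-1)$, so $v$ is a subsolution once $\beta\ge kd/(p-1)$. Then $\max\{0,v\}$ is $\le F$ on the bottom and, after truncation, on the sides. The difficulty is the top $\{y=1\}$: there $v$ is huge unless $a,\beta$ are tiny.

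I would therefore localize. Work in the thin slab $D=\{|x|<1,\ 0<y<y_1\}$ with $y_1\asymp\sqrt{(p-1)/d}$, and compare $\max\{0,v\}$ with $u$ only there. This requires an independent lower bound $u\ge c_0$ on $\{y=y_1,\ |x|\le 1/2\}$, together with choosing $k$ so that $v\le 0$ for $|x|\ge 1/2$ on that slice. Taking $\beta=kd/(p-1)$ and $a=c\sqrt{d/(p-1)}$ then gives $v(0,y)\ge ay$, as desired.

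\textbf{Main obstacle.} The key step is the intermediate estimate $u\ge c_0>0$ at height $\asymp\sqrt{(p-1)/d}$ near the axis. This is the step that shows the ``$1$'' on the sides feeds in across a distance $\sim1$ horizontally while vertical diffusion costs only $\sqrt{p-1}$. I would try to get it from a second subsolution built around the sides. The candidate is $\max\{0,\,g(y)-k(1-|x|^2)\}$ with $g$ convex and $g(0)=0$, or alternatively the tug-of-war comparison used in Lemma~\ref{lem.cylinderlowerbound}. In the latter, player I pulls radially outward, and the noise of size $\sqrt{d/(p-1)}\,\epsilon$ in the $x$-directions is exactly what makes reaching $|x|=1$ before $y=0$ likely from height $\sqrt{(p-1)/d}$. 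If neither works cleanly, I would fall back on a one-dimensional radial comparison in $x$ coupled with a Gaussian-type profile in $y$, and accept a worse absolute constant than $c_1=1/40$.
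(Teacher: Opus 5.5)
Your lower bound has a genuine gap. You reduce it to the estimate $u\ge c_0$ near the axis at height $y_1\asymp\sqrt{(p-1)/d}$, but that estimate is the entire content of the lower bound. Once you have it, two steps finish the proof: the radial monotonicity from the Corollary's proof, $u(0,y)=\min_x u(x,y)$, and comparison with the linear function $c_0y/y_1$ on $\{y<y_1\}$. So your barrier $v$ is not where the difficulty lies.

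None of your candidates proves the estimate.
\begin{itemize}
\item The barrier $\max\{0,g(y)-k(1-|x|^2)\}$ is convex, hence $p$-subharmonic for every $p$, including $p=2$. Concretely, $g$ convex with $g(0)=0$ and $g\le 1$ (forced by the sides) gives $g(y_1)\le y_1$.
\item More generally, your direction-uniform eigenvalue test cannot produce this step at all. Suppose $v\in C^2(Q)$, $v\le F$ on $\partial Q$, and $\Delta v-(2-p)\lambda_{\max}(D^2v)\ge 0$. Then $v\le W:=(d+1)y-dy^2+|x|^2$. Indeed, $W\ge F$ on $\partial Q$ and $\Delta W-(2-p)\lambda_{\max}(D^2W)=-2(2-p)<0$. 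The map $X\mapsto \operatorname{tr}X-(2-p)\lambda_{\max}(X)=\min_{|\nu|=1}\operatorname{tr}[(I-(2-p)\nu\nu^{T})X]$ is monotone, so $v-W$ cannot have a positive interior maximum. Barriers certified this way only ever give $u(0,y)\ge v(0,y)$ with $v(0,y)\le (d+1)y$, independent of $p$, never of order $y/\sqrt{p-1}$.
\item The tug-of-war heuristic is also off. If player I pulls radially outward while player II pulls straight down, the drifts (order $\epsilon$ per step) dominate the noise (variance $O(\epsilon^2/(p-1))$ per step) as $\epsilon\to 0$. So from height $y_1\ll 1$ near the axis the bottom is reached first. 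The useful strategy is the cancelling one of Lemma \ref{lem.cylinderlowerbound}, and analysing it against all opponent strategies would be a separate piece of work.
\end{itemize}

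The missing idea is to use the actual direction of $\nabla w$, not a worst case over directions. The paper takes $w=|x|^2+2a_ky-by^2$ with $b=\frac{d}{2(p-1)}$, $y_k=k\sqrt{(p-1)/d}$ and $a_k=by_{k+2}$. This $w$ is convex in $x$ and strongly concave in $y$, so it fails your test. Yet $\Delta_p^N w=8\bigl[|x|^2(d-b+p-2)+\tfrac d2(a_k-by)^2\bigr]\ge 0$ on $Q_k=\{y\le y_k\}$, because there $a_k-by\ge\sqrt{d/(p-1)}$. The gradient is nearly vertical, so the $y$-concavity is weighted only by $p-1$. Set $m_k=\min_{|x|\le 1}u(x,y_k)$. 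Comparing $m_k\frac{b}{a_k^2}(w-1)$ with $u$ on $\partial Q_k$ and evaluating at $y_{k-1}$ gives $m_{k-1}\ge m_k\bigl(1-\frac{11}{(k+2)^2}\bigr)$. Iterating down from the top, where $u=1$, gives $m_{20}\ge\frac12$, and the linear comparison then yields $c_1=\frac1{40}$.

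Your upper bound is the paper's argument in substance. Your slope $2\sqrt{d/(p-1)}$ is exactly what that barrier delivers. The paper's $w=|x|^2+2ay-by^2$ with $b=a^2$ has $w(0,y)=ay(2-ay)\ge ay$ on $[0,y_1]$, so its closing claim $u(0,y)\le ay$ is off by the same factor $2$.

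Your suggested fixes do not remove this factor. For a separable $g(y)+\phi(|x|)$ checked with the worst-direction bound, one needs $(p-1)g''\le-\sup\Delta_x\phi$. The mean value inequality gives $\sup\Delta_x\phi\ge 2d$, since $\phi(0)=0$ and $\phi\ge 1$ on $|x|=1$. This forces $g'(0)\ge 2\sqrt{d/(p-1)}$, and rescaling $y$ is a special case. Reaching the constant $1$ would need a genuinely different barrier, and neither your proposal nor the paper's proof supplies one.
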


\begin{remark}
For simplicity, in the following calculation, we use a normalized $p$-Laplacian defined as
$\Delta_p^N u =|\nabla u|^{4-p}\Delta_p u= |\nabla u|^2 \cdot \Delta u + (p-2) \Delta_{\infty}^N u$, where 
$\Delta_{\infty}^Nu = \sum_{i,j} \partial_i u \; \partial_{ij}u\;   \partial_j u$.
\end{remark}

\begin{remark}
	The lower bound $u(x,y) \geq c_1 \sqrt{\frac{d}{p-1}} \cdot y$ applies for $y \leq 20 \sqrt{\frac{d}{p-1}}$ and also in the infinite cylinder $\{(x,y) \mid |x| < 1, y > 0\}$.
\end{remark}

In particular, we have the following derivative estimate. Note that in this example, we can use a monotonicity argument to verify that the boundary derivative exists. 
\begin{corollary}
    We have 
    \begin{equation}
        c_1\sqrt{\frac{d}{p-1}}\leq\frac{\partial u}{\partial y}(0)\leq \sqrt\frac{d}{p-1}.
    \end{equation}
\end{corollary}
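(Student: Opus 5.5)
The corollary follows from Theorem \ref{thm:p_in_1_2_main} once we know that the one-sided derivative $\frac{\partial u}{\partial y}(0)$ exists. We have $u(0,0)=F(0,0)=0$, and $u$ is continuous at $(0,0)$ because $F$ is continuous there: $F\equiv 0$ on the bottom face in a neighborhood of $(0,0)$. Dividing \eqref{eq.cyliderbound} by $y$ gives, for every $y\in\bigl(0,\sqrt{(p-1)/d}\bigr]$,
\begin{equation*}
c_1\sqrt{\frac{d}{p-1}}\leq \frac{u(0,y)-u(0,0)}{y}\leq \sqrt{\frac{d}{p-1}}.
\end{equation*}
Taking $\liminf$ and $\limsup$ as $y\downarrow 0$ therefore bounds $\underline{\frac{\partial u}{\partial \n}}(0)$ from below and $\overline{\frac{\partial u}{\partial \n}}(0)$ from above by the two stated constants. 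This already yields the corollary in the $\limsup/\liminf$ sense, with no further work.

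It remains to show that the limit exists, which is the only genuine step. The paper suggests a monotonicity argument, and I would aim to show that $y\mapsto u(0,y)/y$ is monotone on $(0,y_0)$ for some $y_0>0$. The natural tool is the family of rescaled translates
\begin{equation*}
u_\lambda(x,y)=u(\lambda x,\lambda y)\qquad\text{for }\lambda\in(0,1).
\end{equation*}
Each $u_\lambda$ is $p$-harmonic on $Q/\lambda\supset Q$, because the $p$-Laplace equation is invariant under dilation. On $\partial Q$ we compare boundary values. On the bottom face both $u_\lambda$ and $u$ vanish. On the rest of $\partial Q$ we have $u=1\geq u_\lambda$. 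The comparison principle then gives $u_\lambda\leq u$ in $Q$, so $u(0,\lambda y)\leq u(0,y)$. This is only monotonicity of $u$ itself, however, not of the difference quotient.

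To get monotonicity of the quotient, I would first try to show that $u(0,\cdot)$ is concave near $0$; a concave function vanishing at $0$ has a nonincreasing quotient $u(0,y)/y$, and hence a limit. Concavity in $y$ could come from comparing $u$ with $\frac{1}{2}\bigl(u(x,y+t)+u(x,y-t)\bigr)$, but this fails in general for the nonlinear operator.

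If concavity cannot be established, the fallback is a boundary Harnack/regularity route. Near $(0,0)$ the boundary is flat and the boundary data is identically $0$, so $u$ is $C^{1,\alpha}$ up to the flat portion of the bottom face by Lieberman's result \cite{Liebermanboundaryestimate}. Then $\partial_y u(0,0)$ exists classically, and the two-sided bound above identifies its value. This is the step I regard as the main obstacle; the Lieberman regularity route is the cleanest, and I would rely on it.
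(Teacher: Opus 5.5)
Your reduction is right: dividing \eqref{eq.cyliderbound} by $y$ bounds the lower and upper difference quotients at $(0,0)$, so everything hinges on existence of the limit. Your final route, local $C^{1,\alpha}$ regularity up to the flat bottom face via Lieberman, does settle this, but it differs from the paper's argument, which stays elementary.

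The paper first shows $u(0,y)=\min_{|x|\le 1}u(x,y)$. It does this by comparing $u$ with the translate $u(x+x_0,y)$ on $\{x\in B(-x_0,1),\ (x+\frac{x_0}{2})\cdot x_0>0\}\times(0,1)$. On the bisecting hyperplane the two agree by radial symmetry of $u$ in $x$; on the lateral side the translate equals $1$; on top and bottom they coincide. It then compares $u$ on $B^d(0,1)\times(0,y_1)$ with the \emph{linear}, hence $p$-harmonic, function $\frac{y}{y_1}u(0,y_1)$. This function lies below $u$ on the whole boundary exactly because $x=0$ minimizes $u(\cdot,y_1)$. Hence $u(0,y)/y$ is nonincreasing, and the derivative exists and equals $\sup_{y}u(0,y)/y$. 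That linear barrier is the idea your monotonicity attempt lacked; scaling and concavity were the wrong tools.

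The paper's proof is self-contained and uses only the comparison principle, but it exploits the symmetry of $Q$ and $F$. Yours needs no symmetry and works at any flat (indeed $C^{1,\alpha}$) boundary point with locally smooth data. The cost is importing a deep theorem whose usual statement is global, whereas $Q$ has edges and $F$ is discontinuous. So you should invoke the local form of Lieberman's estimate and verify its hypothesis that the Perron solution lies in $W^{1,p}$ near $(0,0)$ with zero trace on the bottom face. This follows from a Caccioppoli estimate with test function $(u-\varepsilon)_+\eta^p$, which is admissible because $u$ is continuous and vanishes on the flat face. With that in hand, odd reflection in $y$ plus interior $C^{1,\alpha}$ regularity would also work.
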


\begin{proof}
    It suffices to show the existence of the boundary derivative. We first show that $u(0,y)=\inf_{x\in B^d(0,1)}u(x,y)$ for all $y\in(0,1)$. 

    For any $x_0\in B^d(0,1)$, consider the function $\tilde u(x,y)=u(x+x_0,y)$ defined on $\tilde Q=\{(x,y):x\in B(-x_0,1),\ (x+\frac{x_0}{2})\cdot x_0> 0,\ 0<y<1\}.$ Note that $\partial \tilde Q$ consists of the following parts:
    \begin{gather*}
        \partial \tilde Q=\Gamma_1\cup\Gamma_2\cup\Gamma_3,
        \\ \Gamma_1= \left\{(x,y): x\in \overline B(-x_0,1),\ (x+\frac{x_0}{2})\cdot x_0\geq 0,\ y\in\{0,1\}\right\};
        \\ \Gamma_2=\left\{(x,y): x \in \partial B(-x_0,1), (x+\frac{x_0}{2})\cdot x_0\geq 0,\ 0<y<1\right\};
        \\ \Gamma_3=\left\{(x,y): x \in \overline B(-x_0,1), (x+\frac{x_0}{2})\cdot x_0= 0,\ 0<y<1\right\}.
    \end{gather*}
    By definition $u=\tilde u$ on $\Gamma_1$. Note that $\tilde u=1\geq u$ on $\Gamma_2$. On $\Gamma_3$, we use the observation that $u$ has a spherical symmetry in $x$ that $u(x,y)=u(x',y)$ as long as $|x|=|x'|$. Therefore, we have $\tilde u=u$ on $\Gamma_3$. The comparison principle shows $u(x_0,y)=\tilde u(0,y)\geq u(0,y)$.

    Therefore, for any $y_1>0$, the $p$-harmonic function $\frac{y}{y_1}u(0,y_1)$ is less than or equal to $u$ on the set $\{y=y_1\}$. Applying the comparison principle on $B(0,1)\times (0,y_1)$, we have $u(0,y_2)\geq\frac {y_2}{y_1}u(0,y_1)$. This means that $\frac{u(0,y)}{y}$ is decreasing in $y$. Together with \eqref{eq.cyliderbound}, we conclude that the limit at $0$ exists.
    
\end{proof}

Now we return to the original estimate.

\begin{proof}[Proof of Theorem~\ref{thm:p_in_1_2_main}]
We will use the quadratic test functions 
$$
w(x,y) = |x|^2 + 2ay - b y^2\,.
$$
We have 
\begin{align}
\nabla w & = 2(x, a-by);  \notag \\
\Delta w & = 2(d-b);  \notag \\
\Delta_{\infty}^N w & = 8 (|x|^2 - b (a-by)^2).
\end{align}
Thus 
\begin{align} \label{eq:delta_p_w_rewrite}
\Delta_p^N w & = 8 \Bigl[ \Bigl( |x|^2 + (a-by)^2\Bigr)(d-b) + (p-2)\Bigl(|x|^2-b(a-by)^2\Bigr)\Bigr] \notag \\
& = 8 \Bigl[ |x|^2 (d-b+p-2) + (a-by)^2 (d-b(p-1)\Bigr)\Bigr].
\end{align}
\noindent \textbf{Upper bound on $u(0,y)$:} \\

Let $b = \frac{d}{p-1}$ so that $d-b + p-2<0$ for $1 < p < 2$. Then $\Delta_p w < 0$ in (\ref{eq:delta_p_w_rewrite}).

Moreover, for $a = \sqrt{\frac{d}{p-1}} = \sqrt{b}$ and $y_1 = \sqrt{\frac{p-1}{d}}$, we have 
\begin{align}
\bullet & \quad \; \; w(x,y_1) \geq w(0, y_1) = 1; \notag \\ \notag \\
\bullet & \; \left\{
\begin{array}{ll}
w(x,y) \geq w(x,0) = 1 \\ \\
\mbox{if } |x|=1 \; \mbox{ and } \; 0 \leq y \leq y_1
\end{array}
\right\}. \notag 
 \end{align}
 Thus $  w \geq u$ on $\partial {Q}_1$ where 
 $$
 {Q}_1 = \Bigl\{ (x,y) \in Q \mid y \leq y_1 \Bigr\}.
 $$
 Since $w$ is $p$-superharmonic, we conclude that $u \leq w$ on $Q_1$. In particular, 
 $$
 y \leq \sqrt{\frac{p-1}{d}} \implies u(0,y) \leq ay = y \sqrt{\frac{d}{p-1}}.
 $$
 
\noindent \textbf{Lower bound on $u(0,y)$:} \\
 
Let $y_k = k \sqrt{\frac{p-1}{d}}$ and $b = \frac{d}{2(p-1)}$.  \\

For $\Delta_p w \geq 0$ it suffices by (\ref{eq:delta_p_w_rewrite}) that 
\begin{align} \label{eq:b_ub}
b \leq \frac{d}{2} \left( a - by\right)^2. 
\end{align}
For (\ref{eq:b_ub}) to hold in $Q_k = \left\{(x,y) \in Q \mid y \leq y_k\right\}$ we need 
$$
\frac{1}{p-1} \leq (a - b y_k)^2,
$$
that is, 
\begin{align} \label{eq:a_lb}
a \geq b y_k + (p-1)^{-\frac{1}{2}} = \frac{k \sqrt{d}+2}{2 \sqrt{p-1}}.
\end{align}
Let $a_k = \frac{(k+2)\sqrt{d}}{2 \sqrt{p-1}}$ so that $\frac{a_k}{b} = (k+2)\sqrt{\frac{p-1}{d}}=y_{k+2}$.
\medskip

Then 
$$
f(x,y) = \frac{b \cdot (y-y_{k+2})^2 + 1 - |x|^2}{b \cdot y_{k+2}^2}
$$
satisfies $f(x,0) \geq 1$ when $|x| \leq 1$ and $f(x,y) \geq 0$ in $Q$.

Thus $$
1 - f(x,y) = \frac{2b \cdot y \cdot y_{k+2} - b y^2 - 1 + |x|^2}{b \cdot y_{k+2}^2} = \frac{b \cdot (2 a_k  \cdot y - b y^2 + |x|^2 - 1)}{a_k^2}
$$
satisfies $1 - f(x,0) \leq 0$ and $1 - f(x,y) \leq 1$ on $\partial Q_k$, where recall $$Q_k = \Bigl\{ (x,y) \mid y \leq y_k \Bigr\}\,.$$
Since $a_k$ satisfies (\ref{eq:a_lb}), we have 
\begin{align} \label{eq:delta_p_1_minus_f}
\Delta_p (1-f) \geq 0.
\end{align}
Define $m_k = \min \Bigl\{ u(x,y_k) \mid |x| \leq 1 \Bigr\}$. Then $m_k \cdot (1-f) \leq u$ on $\partial Q_k$ which implies that $m_k \cdot (1-f) \leq u$ on $Q_k$.

In particular, 
\begin{align}
|x| \leq 1 \implies u(x, y_{k-1}) \geq m_k \cdot \left[ 1 - f(x,y_{k-1})\right].
\end{align}
Observe that 
\begin{align} \label{eq:u_lb_mk}
f(x,y_{k-1}) \leq \frac{2 b \left(3 \sqrt{\frac{p-1}{d}}\right)^2 + 2}{2b \cdot y_{k+2}^2} = \frac{11}{(k+2)^2}.
\end{align}
Thus by (\ref{eq:u_lb_mk})
\begin{align} \label{m_k_1_lb_m_k}
m_{k-1} \geq m_k \left(1 - \frac{11}{(k+2)^2}\right) \; \mbox{ for all } k \geq 2 .
\end{align}
Therefore 
\begin{align}
m_k \geq \prod_{\ell = k+1}^{\infty} \left(1 - \frac{11}{(\ell+2)^2}\right) \geq 1 - \sum_{\ell = k+1}^{\infty} \frac{11}{(\ell+1)(\ell+2)} = 1 - \frac{11}{k+2}. \notag 
\end{align}
In particular, $m_{20} \geq 1/2$.
\medskip

Finally, observe that $u(x,y) \geq \frac{m_k}{y_k}y$ on $\partial Q_k$, so $u(x,y) \geq \frac{m_k y}{y_k}$ on $Q_k$. This implies that 
\[
\left\{
\begin{array}{ll}
u(x,y) \geq \frac{1}{40} \sqrt{\frac{d}{p-1}} \cdot y \; \mbox{ for } (x,y) \in Q_{20} \\ \\
u(x,y) \geq \frac{1}{2} \; \mbox{ for } (x,y) \in Q \setminus Q_{20}
\end{array}
\right\}. 
\]
\end{proof}
\begin{remark}
Since $$\prod_{\ell=7}^{\infty} \left(1 - \frac{11}{(\ell+2)^2}\right) = 0.2651\ldots \geq \frac{1}{4},$$ the constant $\frac{1}{40}$ can be improved to $\frac{1}{24}$.
\end{remark}
%
%

\section*{Acknowledgements}
The research of Y. Peres and H. Wang is supported by the National Natural Science Foundation of China RFIS grant (No. W2531011).
In addition, the research of H. Wang is supported by the National Natural Science Foundation of China (Grant Nos. 12595284, 12595280).

\bibliographystyle{abbrv}
\bibliography{Hopfref}

\end{document}